\newcommand{\Z}{\mathbb{Z}}
\newcommand{\Q}{\mathbb{Q}}
\newcommand{\R}{\mathbb{R}}
\newcommand{\C}{\mathbb{C}}
\renewcommand{\P}{\mathbb{P}}
\newcommand{\p}{\mathfrak{p}}
\newcommand{\Fix}{\operatorname{Fix}}
\newcommand{\iso}{\stackrel{\sim}{\longrightarrow}}
\renewcommand{\d}{\partial}
\providecommand{\abs}[1]{\left|#1\right|}
\newcommand{\set}[1]{\left\{{#1}\right\}}
\newcommand{\CP}{\mathbb{CP}}
\newcommand{\half}{\frac{1}{2}}
\newcommand{\al}{\alpha}
\newcommand{\conj}{\overline}
\newcommand{\M}{\mathcal{M}}
\newcommand{\Line}{\mathbb{L}}
\newcommand{\ExDiv}{\mathbb{E}}
\newtheorem{thm}{Theorem}[section]
\newtheorem{prop}[thm]{Proposition}
\newtheorem{cor}[thm]{Corollary}
\newtheorem{lem}[thm]{Lemma}
\begin{document}
\title{The open Gromov-Witten-Welschinger theory of blowups of the projective plane}
\author{Asaf Horev, Jake P. Solomon}
\date{October 2012}
\maketitle

\begin{abstract}
We compute the Welschinger invariants of blowups of the projective plane at an arbitrary conjugation invariant configuration of points. Specifically, open analogues of the WDVV equation and Kontsevich-Manin axioms lead to a recursive algorithm that reconstructs all the invariants from a small set of known invariants. Example computations are given, including the non-del Pezzo case.
\end{abstract}

\pagestyle{plain}

\tableofcontents


\section{Introduction}

The object of this paper is the calculation of the Welschinger invariants of $\CP^2_{r,s}$, the blowup of a projective plane at $r$ real points and $s$ pairs of conjugate points, for arbitrary $r,s$.

In \cite{ko}, Kontsevich and Manin gave a simple recursive formula calculating the Gromov-Witten invariants of $\CP^2$.
The geometric insight behind the formula is a splitting principle, captured by the WDVV equations~\cite{Witten}, and several other properties known as axioms.
The WDVV equations and Kontsevich-Manin axioms were then used by G\"ottsche and Pandharipande~\cite{GP} to give a set of formulae that recursively compute the Gromov-Witten invariants of $\CP^2_r$, the blowup of $\CP^2$ at $r$ points.

A \emph{real symplectic manifold} $(X,\omega,\phi)$ is a  symplectic manifold $(X,\omega)$ together with an anti-symplectic involution
\begin{align*}
\phi: X \to X, \quad \phi^*\omega = -\omega.
\end{align*}
The involution $\phi$ generalizes complex conjugation. For a real strongly semi-positive symplectic manifold of dimension $4$ or~$6,$ Welschinger~\cite{Welschinger,We2} defined invariants based on a signed count of real genus $0$ $J$-holomorphic curves passing through a $\phi$-invariant set of point constraints. In dimension $4$ the sign is given by the parity of the number of isolated real double points and in dimension $6$ by spinor states.
For a real strongly semi-positive symplectic manifold $X$ of dimension $4$ or $6$ such that the fixed point set $\Fix(\phi) \subset X$ is orientable, Cho~\cite{cho} used the moduli space of $J$-holomorphic discs to define open Gromov-Witten invariants.
Independently, the second author~\cite{jake1} defined open Gromov-Witten invariants of general real symplectic manifolds of dimension $4$ and $6$ in arbitrary genus, assuming orientability of $\Fix(\phi)$ only in dimension $6.$ He also showed that the open Gromov-Witten invariants recover Welschinger's invariants in the strongly semi-positive genus $0$ case.

The present paper uses analogues of the WDVV equation and Kontsevich-Manin axioms for open Gromov-Witten invariants due to the second author~\cite{jake2}. He showed that the open WDVV equation and axioms give recursive formulae for the Welschinger invariants of $\CP^2.$ We prove the following theorem for $\CP^2_{r,s}.$
\begin{thm} \label{complete OGW}
The Welschinger invariants of $\CP^2_{r,s}$ with arbitrary real and complex conjugate point constraints are completely determined by
\begin{itemize}
\item the open Kontsevich-Manin axioms (see Subsection \ref{axioms}),
\item the open WDVV equations (see Subsection \ref{open WDVV}),
\item the closed Gromov-Witten invariants of $\CP^2_{r+2s}$ (computed by~\cite{GP}), and
\item a finite set of known initial values (see Section \ref{initial calculations}).
\end{itemize}
\end{thm}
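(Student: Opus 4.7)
My plan is to follow the template established by G\"ottsche and Pandharipande for the closed GW invariants of $\CP^2_r$, replacing the classical WDVV relation by its open analogue from \cite{jake2}. I introduce a complexity function on the set of open GW invariants of $(\CP^2_{r,s},\phi)$, for example the triple $(d,\beta,k)$ where $d$ is the degree of the curve class with respect to the hyperplane pullback, $\beta$ records the boundary/disc data, and $k$ is the total number of point insertions, ordered lexicographically. The goal is to prove by induction on this complexity that every invariant can be expressed as a $\Z$-linear combination of invariants of strictly smaller complexity, together with closed GW invariants of $\CP^2_{r+2s}$ (which are known by \cite{GP}), and that the recursion terminates at the finite list of initial values recorded in Section \ref{initial calculations}.

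\textbf{Steps.} I first use the open Kontsevich-Manin axioms to normalize each invariant: the unit axiom eliminates trivial insertions, the divisor axiom absorbs insertions of real exceptional divisors and of conjugate pairs of exceptional divisors into known intersection numbers, and the degree axiom pins down which invariants satisfy the dimension constraint and can be nonzero at all. This reduces the problem to invariants carrying only point insertions. For such a targeted invariant I then apply the open WDVV equation to a carefully chosen quadruple of cohomology classes on $\CP^2_{r,s}$, producing an identity whose terms I sort into three groups: (a) the desired invariant itself, with some coefficient to be determined; (b) products of open invariants coming from degree splittings $d=d_1+d_2$ or partitions of the point insertions into two subsets, each of which has strictly smaller complexity; and (c) purely closed sphere contributions governed by the classical WDVV of $\CP^2_{r+2s}$. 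Solving for the term in~(a) closes the inductive step.

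\textbf{Main obstacle.} The core difficulty, already present in \cite{GP} and amplified here, is to exhibit, for every invariant of positive complexity, a choice of the four WDVV classes and a distinguished splitting such that the coefficient of the targeted invariant is nonzero and the remaining terms lie in classes~(b) or~(c). Two new features make this more delicate in the open setting. First, the open WDVV of \cite{jake2} mixes disc and sphere moduli and carries boundary correction terms, so I have to control both the homology class and the boundary data simultaneously to guarantee that each factor that appears is genuinely lower in the chosen complexity. Second, the presence of conjugate pairs of blowups means the natural basis of $H^*(\CP^2_{r,s})$ should be organized by the $\phi$-action into $\phi$-invariant and $\phi$-anti-invariant combinations; the divisor axiom and the WDVV splitting behave differently on the two pieces, and a uniform reconstruction requires treating insertions of anti-invariant classes (encoding the complex conjugate point constraints) in a separate case. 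Once these choices are made, the recursion collapses onto the finite initial data, yielding the theorem.
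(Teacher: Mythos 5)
Your outline follows the same overall route as the paper (normalize with the axioms to point-only insertions $\Gamma_{[d,\al,\beta],k}$, then induct using open WDVV with closed invariants entering as known input), but what you label the ``main obstacle'' is in fact the entire mathematical content of the proof, and you leave it unresolved. The gap is concrete: a single application of the open WDVV equation to the class $[d,\al,\beta]$ does \emph{not} suffice in all cases. When the number of interior insertions $l=\half(\mu([d,\al,\beta])-k-1)$ is at least $2$, or $l\geq 1$ with $k\geq 1$ or with some $\al_i,\beta_j\neq 0$, one can choose the indices $a,b,c$ in Theorem~\ref{open WDVV} so that the targeted invariant appears with a manifestly nonzero coefficient and all other open terms have strictly smaller degree. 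But in the case $l=0$ (purely real point constraints, which is exactly the classically interesting Welschinger case) there is no interior marked point to differentiate against, and no choice of $a,b,c$ isolates $\Gamma_{[d,\al,\beta],k}$ directly. The paper's resolution is to apply two different WDVV identities to the \emph{higher} degree class $[d+1,\al,\beta]$ and subtract them, so that the unknown $\Gamma_{[d,\al,\beta],k}$ survives as the term paired with the degree-one invariant $\Gamma_{[1,0,0],0}$ or $\Gamma_{[1,0,0],2}$, with coefficient proportional to $\tfrac{d+1}{4}$ or to $d^2+(1-k)d-k$; one must then check these coefficients are nonzero for $d\geq 2$ and that the known values $\Gamma_{[1,0,0],0}=1$, $\Gamma_{[1,0,0],2}=2$ do not vanish. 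Nothing in your proposal anticipates this degree-raising step, and without it the induction does not close.

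Two smaller but still genuine issues. First, well-foundedness of your lexicographic induction requires knowing that the multi-indices appearing in the splittings are bounded: the paper proves via positivity of intersections (exceptional curves and lines through blowup points) that $\Gamma_{[d,\al,\beta],k}=0$ unless $d\geq 0$ and $0\leq\al_i,\beta_j\leq d$ (with one exceptional family), and without such a vanishing lemma the terms of type~(b) in your scheme need not be finite in number nor comparable in your order. Second, your remark about treating ``anti-invariant'' insertions separately is misdirected: the open invariants are defined only on $\phi$-invariant cohomology, conjugate point pairs enter as interior insertions of the $\phi$-invariant point class, and the anti-invariant directions are absorbed into the closed input through the sums $\tilde N_{[d,\al,\beta]}=\sum_c N_{(d,\al,\beta+c,\beta-c)}$ over splittings of the conjugate exceptional classes.
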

Our proof will consist of a set of recursive relations and an explicit algorithm for computing the open Gromov-Witten invariants of $X$ starting from a finite set of initial values.
Theorem \ref{complete OGW} is a direct consequence of Section \ref{proof of recursion}, and the initial values are computed in Section \ref{initial calculations}. A selection of values obtained from the algorithm is given in Section~\ref{sec:example}.

The Welschinger invariants of $\CP^2_{r,s}$ for $r+2s \leq 6$ with purely real point constraints were studied previously by Itenberg, Kharlamov and Shustin~\cite{IKS2,IKS3,IKS4,IKS5,IKS6,IKS7,IKS8}. They used techniques of tropical geometry, and a real analogue of the Caporaso-Harris formula. In~\cite{BM09}, Brugall\'e and Mikhalkin computed the Welschinger invariants of $\C P^2_{r,s}$ for $r+2s \leq 3$ with arbitrary real and complex conjugate pairs of point constraints. They used the tropical technique of floor diagrams. Our results agree with the previously obtained results in most cases.

\subsection{Acknowledgements}
The authors would like to thank Emmanuel Farjoun for helpful conversations. The authors were partially supported by United States - Israel Binational Science Foundation grant No. 2008314 and Marie Curie International Reintegration Grant No. 239381.

\section{\texorpdfstring{Gromov-Witten invariants and Kontsevich-\\Manin axioms}{Gromov-Witten invariants and Kontsevich-Manin axioms}}
\label{GW axioms}

Genus zero Gromov-Witten invariants of a symplectic manifold $(X,\omega)$ of dimension $2n$ are a set of maps $ GW_d: H^*(X;\Q)^{\otimes k} \to \Q$, one for each $d\in H_2(X;\Z)$ and $k=0,1,2,\ldots$
Intuitively, the Gromov-Witten invariant $ GW_d(\gamma_1,\ldots,\gamma_k)$ counts the number of $J$-holomorphic spheres intersecting submanifolds $R_1,\ldots, R_k \subset X$ that represent the Poincar\'e duals of $\gamma_1,\ldots,\gamma_k$.

The Gromov-Witten invariants admit a set of basic properties, the Kontsevich-Manin axioms \cite{ko}.
\begin{prop}(The Kontsevich-Manin axioms.)\\
\begin{enumerate}
\item Symmetry:
 $GW_d(\gamma_1 \otimes \ldots \otimes \gamma_k)$ is $\Z_2$ graded symmetric with respect to permutations of the constraints $\gamma_1,\ldots,\gamma_k$.
\item Grading: $GW_d (\gamma_1\otimes \cdots \otimes \gamma_k)=0$ unless $ \Sigma \abs{\gamma_i} = 2 \dim_\R X + 2 c_1(d) + 2n -6 $.
\item Fundamental class: Let $t_0 \in H^0(X)$ be the unit. Then
\[
GW_d (\gamma_1 \otimes \ldots \otimes \gamma_k \otimes t_0) =  \begin{cases}  \int_X \gamma_1 \wedge \gamma_2 & \text{if d=0,k=2,} \\ 0  & \text{otherwise.} \end{cases}
\]
\item Divisor: For $ \gamma \in H^2 (X) $, we have
\[GW_d (\gamma_1 \otimes \cdots \otimes \gamma_k \otimes \gamma) = \left( \int_d \gamma \right) \cdot GW_d (\gamma_1 \otimes \cdots \otimes \gamma_k).
\]
\end{enumerate}
\end{prop}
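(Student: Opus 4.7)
The plan is to deduce each axiom from standard properties of the moduli space $\overline{\M}_{0,k}(X,d)$ of genus zero stable $J$-holomorphic maps, together with the evaluation maps $\ev_i\colon \overline{\M}_{0,k}(X,d) \to X$ and the forgetful maps $\pi\colon \overline{\M}_{0,k+1}(X,d) \to \overline{\M}_{0,k}(X,d)$. Throughout, $GW_d(\gamma_1\otimes\cdots\otimes\gamma_k)$ is realised as the integral $\int_{[\overline{\M}_{0,k}(X,d)]^{vir}} \bigwedge_i \ev_i^*\gamma_i$, and the required inputs are the $S_k$-equivariance of the virtual class, its dimension formula, and its compatibility with the forgetful maps under fibre integration.

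Symmetry follows immediately from $S_k$-equivariance: relabelling marked points fixes the virtual class, so permuting the factors only introduces the standard Koszul sign from commuting odd-degree classes past each other in the wedge product. Grading is the statement that the integral $\int_{[\overline{\M}_{0,k}(X,d)]^{vir}} \bigwedge_i \ev_i^*\gamma_i$ vanishes unless the total degree of the integrand matches the virtual real dimension of the moduli space; this dimension is the expression stated in the axiom.

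For the fundamental class axiom, I would first dispatch the exceptional case $d=0$, $k=2$, where $\overline{\M}_{0,3}(X,0)\cong X$ with all three evaluation maps coinciding with the identity, so the integral reduces to $\int_X \gamma_1\wedge \gamma_2$. In every other case either $k\geq 3$ or the number of marked points is sufficient for the target $\overline{\M}_{0,k}(X,d)$ to be a stable moduli space, and the forgetful map $\pi$ has generic fibres of complex dimension one. Since $\ev_{k+1}^* t_0 = 1$ and the remaining integrand $\bigwedge_{i=1}^k \ev_i^*\gamma_i$ is pulled back through $\pi$, the projection formula together with $\pi_*(1)=0$ forces the invariant to vanish.

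The divisor axiom is proved by the same forgetful map: restricted to a generic fibre, $\ev_{k+1}$ is the inclusion of the underlying $J$-holomorphic sphere into $X$, so $\pi_*\ev_{k+1}^*\gamma = \bigl(\int_d \gamma\bigr)\cdot 1$ in cohomology, and the projection formula immediately delivers the axiom. The technical heart of this programme, and the main obstacle, is verifying that $[\overline{\M}_{0,k+1}(X,d)]^{vir}$ pushes forward correctly along $\pi$ over the boundary strata where $\pi$ fails to be a submersion; this compatibility, together with the identification of the restriction of $\ev_{k+1}$ to a generic fibre with the stable map itself, is the substantive input and is standard in the Kontsevich--Manin literature cited.
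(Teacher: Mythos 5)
The paper offers no proof of this proposition at all: it is quoted as standard background, with the argument deferred to Kontsevich--Manin~\cite{ko} and to~\cite{MS}, Chapter~7. Your sketch is essentially the standard argument from those references --- $S_k$-equivariance of the virtual class for symmetry, the dimension count for grading, and the forgetful map $\pi\colon \overline{\M}_{0,k+1}(X,d)\to\overline{\M}_{0,k}(X,d)$ with the projection formula (via $\pi_*(1)=0$ and $\pi_*\ev_{k+1}^*\gamma=\bigl(\int_d\gamma\bigr)\cdot 1$) for the fundamental class and divisor axioms --- and it is correct as a sketch, with the genuinely hard input (compatibility of the virtual class with $\pi$ over the boundary strata) honestly flagged rather than claimed. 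Two points deserve attention. First, the grading identity as printed in the paper, $\Sigma \abs{\gamma_i} = 2\dim_\R X + 2c_1(d) + 2n - 6$, is \emph{not} the virtual real dimension of $\overline{\M}_{0,k}(X,d)$, which is $\dim_\R X + 2c_1(d) + 2k - 6$; the term $2\dim_\R X$ is evidently a typo for $2k$ (compare the grading axiom for the open invariants, which does carry the marked-point count), so your assertion that ``this dimension is the expression stated in the axiom'' is only true after that correction. Second, your case division for the fundamental class axiom is slightly garbled: the cases not covered by the forgetful-map argument are exactly $d=0$ with $k\le 1$, which are handled by the instability (hence emptiness, or vanishing by convention) of $\overline{\M}_{0,k+1}(X,0)$, not by $\pi$; and in the divisor axiom the restriction of $\ev_{k+1}$ to a generic fibre is the stable map $u\colon S^2\to X$ itself rather than an inclusion --- what you actually use is $\int_{S^2}u^*\gamma=\int_d\gamma$, which holds for any representative of $d$.
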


An expository account of genus $0$ Gromov-Witten invariants and the Kontse\-vich-Manin axioms can be found in~\cite{MS} Chapter $7.$

\subsection{Open Gromov-Witten invariants}

For a real symplectic manifold of dimension $2n$ with $n=2,3$, the second author~\cite{jake1} defined Gromov-Witten type invariants using intersection theory on the moduli space of $J$-holomorphic discs. In this paper the term \emph{open Gromov-Witten invariant} will refer to this definition. In the following paragraphs we recall the relevant material from~\cite{jake1}.

Let $(X,\omega,\phi)$ be a real symplectic manifold and let $L=\Fix(\phi) \subset X$ be the Lagrangian submanifold given by the fixed points of $\phi.$
The \emph{$\phi$-invariant} homology groups with rational coefficients $H^{\phi}_{2i}(X;\Q)$,
$H^{\phi}_{2i}(X,L;\Q),$ are defined as the $(-1)^i$ eigenspaces of
\begin{align*}
\phi_*:H_{2i}(X;\Q) \to H_{2i}(X;\Q), \quad \phi_*:H_{2i}(X,L;\Q) \to H_{2i}(X,L;\Q)
\end{align*} respectively.
There is a unique $\phi_*$-invariant projection $H_{2i}(X;\Q) \to H^{\phi}_{2i}(X;\Q).$ The $\phi$-invariant homology group $H^{\phi}_{2i}(X;\Z) \subseteq H^{\phi}_{2i}(X;\Q)$ is the image of the composition
\[ H_{2i}(X;\Z) \to H_{2i}(X;\Q) \to H^{\phi}_{2i}(X;\Q). \]
The relative $\phi$-invariant homology groups $H^{\phi}_{2i}(X,L;\Z) \subseteq H^{\phi}_{2i}(X,L;\Q)$ are defined in the same way.
Similarly, we define the \emph{$\phi$-invariant} cohomology group $H^{2i}_\phi(X;\Q)$ to be the $(-1)^i$ eigenspace of \(\phi^*: H^{2i}(X;\Q) \to H^{2i}(X;\Q) \).
The cup product makes $H^*_\phi(X;\Q)$ into a ring.

For simplicity, in the following overview, we restrict to the case $n = 2,$ so $X$ is real $4$ dimensional. The open Gromov-Witten invariants of $(X,\omega,\phi)$ are maps
\[ OGW_{d,k} : H^*_\phi(X;\Q)^{\otimes l}\to \Q, \]
one for every choice of $\phi$-invariant homology class $d\in H^\phi_2(X,L;\Z)$ and non-negative integers $k,l\geq 0$.

In the following, we work with an $\omega$-tame almost complex structure $J.$ We denote by $\M^D_{k,l}(X/L,d)$ the moduli space of $J$-holomorphic discs
\[
u:(D^2, \d D^2) \to (X,L)
\]
such that $u_*([D^2,\partial D^2]) \in H_2(X,L;\Z)$ projects to $d\in H^\phi_2(X,L;\Z)$, with $k$ marked boundary points and $l$ marked interior points, modulo reparametrization.
We denote the evaluation maps of the boundary marked points and interior marked points by
 \begin{align*}
 evb_i:\M^D_{k,l}(X/L,d) \to L, \quad i=1,\ldots,k \\
 evi_j: \M^D_{k,l}(X/L,d) \to X,\quad j=1,\ldots, l
\end{align*}
See~\cite{Frauen} and~\cite{jake1} for details.

Throughout this paper, we denote by $\mu : H_2^\phi(X;\Z) \to \Z$ the Maslov index. The following theorem proved in~\cite{jake1} gives a relative orientation of the moduli space of discs.
\begin{thm} \label{relative orientation}
Assume $k=\mu(d)+1 \mod 2$. A $Pin^-$ structure $\mathfrak{p}_L$ on $L$ and an orientation on $L$ if $L$ is orientable determine up to homotopy an isomorphism
\[
\det(T \M^D_{k,l}(X/L,d)) \iso \otimes_{i=1}^{k} evb_i^* \det(TL).\]
\end{thm}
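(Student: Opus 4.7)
The plan is to identify the determinant line of the moduli space with a tensor product involving the determinant of the linearized Cauchy--Riemann operator $D_u$ and contributions from the marked points, and then to orient each factor. Near a map $u$ representing a point of $\M^D_{k,l}(X/L,d)$, standard Fredholm theory presents the moduli space as the zero locus of a Cauchy--Riemann-type section, yielding a canonical-up-to-homotopy isomorphism
\[
\det(T_u \M^D_{k,l}(X/L,d)) \cong \det(D_u) \otimes \bigotimes_{i=1}^{k} T_{z_i}\d D^2 \otimes \bigotimes_{j=1}^{l} \det(T_{w_j}D^2) \otimes \det(\mathfrak{g})^{-1},
\]
where $\mathfrak{g}$ is the Lie algebra of residual reparametrizations. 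The interior marked point contributions and the reparametrization factor are canonically oriented by the complex structure on the disc, so everything reduces to orienting $\det(D_u)$ and then matching the boundary marked point factors with the right-hand side of the claimed isomorphism.

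To orient $\det(D_u)$ I would use the doubling construction: the Cauchy--Riemann operator on $(D^2,\d D^2)$ with values in $(u^*TX, u^*TL)$ is glued to its complex conjugate across the boundary to yield a complex-linear Cauchy--Riemann operator on the doubled sphere. The determinant of the doubled operator is canonically complex oriented, so orienting $\det(D_u)$ amounts to choosing a square root of this orientation, which is a $\Z/2$ datum carried by the boundary loop $u|_{\d D^2}$ in $L$. The obstruction to coherently choosing this square root over the moduli space is controlled by $w_1(L)^2 + w_2(L)$ pulled back along the boundary circle, which is precisely the class killed by a $Pin^-$ structure (together with an orientation when $L$ is orientable). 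This is the content of the orientation theorem for the linearized operator in~\cite{jake1}, which I would invoke to obtain the desired trivialization of $\det(D_u)$.

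For the boundary marked points I would use a local excision argument: at each $z_i$ the factor $T_{z_i}\d D^2$ is a real line oriented by the standard boundary orientation of $D^2$, and a Mayer--Vietoris-style degeneration replaces the contribution of a small neighborhood of $z_i$ to $\det(D_u)$ by $\det(T_{u(z_i)}L) \otimes T_{z_i}\d D^2$. Combining these local pieces for all $k$ boundary marked points with the already-oriented interior and automorphism factors produces the asserted isomorphism
\[
\det(T\M^D_{k,l}(X/L,d)) \iso \bigotimes_{i=1}^{k} evb_i^*\det(TL).
\]

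The parity hypothesis $k \equiv \mu(d)+1 \pmod 2$ arises from matching the $k$-fold sign incurred by permuting the boundary evaluation factors with the Maslov-index-dependent sign governing the doubling ambiguity, and only under this congruence do the two signs cancel. The main technical obstacle I expect is precisely this final compatibility check: tracking signs through the doubling, gluing, and excision steps carefully enough to confirm that, when the parity condition holds, the resulting isomorphism is independent of the chosen degeneration path, the cyclic labeling of the boundary marked points, and all auxiliary gluing data, so that it is well defined up to homotopy purely in terms of $\mathfrak{p}_L$ and, in the orientable case, the chosen orientation of $L$.
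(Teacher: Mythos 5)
The paper does not actually prove this theorem: it is imported from~\cite{jake1} without argument, so there is no in-paper proof to measure yours against. Judged on its own terms, your outline follows the strategy of the cited reference (and of the related orientation analysis of Fukaya--Oh--Ohta--Ono): split $\det(T\M^D_{k,l}(X/L,d))$ into the determinant of the linearized operator, marked-point factors, and the reparametrization algebra; orient the interior and automorphism factors complex-analytically; use the doubling construction together with the $Pin^-$ structure to control the remaining real ambiguity in $\det(D_u)$; and trade each boundary marked point for a copy of $\det(TL)$ by a degeneration argument. That is the right route.

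Two points are imprecise enough that the sketch does not yet constitute a proof. First, the obstruction to orienting $\det(D_u)$ coherently is not ``$w_1(L)^2+w_2(L)$ pulled back along the boundary circle'' --- a degree-two class restricted to a one-manifold vanishes for trivial reasons. The correct statement is that the orientation class of the determinant line bundle over a \emph{loop} of boundary conditions is computed by evaluating $w_1^2+w_2$ on the torus in $L$ swept out by the family of boundary loops, and it is this evaluation that a $Pin^-$ structure (plus an orientation when $L$ is orientable) trivializes. Second, the parity hypothesis $k\equiv\mu(d)+1 \bmod 2$ does not come from permuting evaluation factors: given the $Pin^-$ structure, $\det(D_u)$ carries $\mu(d)+1$ factors of $\det(T_{u(z_0)}L)$ modulo the canonically trivial $\det(TL)^{\otimes 2}$ (already visible in the model case $\mu=0$, where the kernel consists of constants and contributes exactly one such factor), and the congruence is precisely the condition that this count match $k$ so that the surplus is an even tensor power. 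Finally, you correctly identify the coherence check --- independence of degeneration paths, gluing data, and labeling --- as the hard part, but your proposal defers that check to~\cite{jake1} rather than carrying it out; since that check \emph{is} the theorem, the proposal should be read as a plan for reproving the cited result rather than a self-contained argument.
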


We fix a $Pin^-$ structure $\mathfrak{p}_L$ on $L$ and an orientation on $L$ if $L$ is orientable.

Let $\gamma_1,\ldots,\gamma_l \in H^*_\phi(X;\Q)$ be homogeneous $\phi$-invariant cohomology classes.
Choose $R_i:M_i \to X$ smooth maps representing the Poincar\'e duals of $\gamma_i$, where $M_i$ are smooth oriented manifolds of dimension $2n-\abs{\gamma_i}$.
Choose also $k$ points on $L$ represented by $S_i: pt \to L,$ for $i=1,\ldots,k$.
Let
\[
ev:  \M^D_{k,l}(X/L,d)\to X^l \times L^k,
\]
be the product
\[
ev = evi_1 \times \cdots \times evi_l \times evb_1 \times \cdots \times evb_k,
\]
and let
\[
R:  M_1 \times \cdots \times M_k  \times pt \times \cdots \times pt \to X^l \times L^k,
\]
be the product
\[
R= R_1 \times \cdots \times R_l \times S_1 \times \cdots \times S_k.
\]
Generically, $R$ and $ev$ are transverse, so their pull-back
\begin{align*}
\xymatrix{
P  \ar[r]^<<<<<<<{p_1} \ar[d]^{p_2} & M_1 \times \cdots \times M_n \times pt \times \cdots \times pt \ar[d]^{R} \\
\M^D_{k,l}(X/L,d)\ar[r]^{ev} & X^l \times L^k
}
\end{align*}
is a smooth manifold of dimension
\begin{align*}
\dim P &= \dim \M^D_{k,l} (X/L,d) + \dim(M_1 \times \cdots \times M_k)  - \dim (X^l \times L^k)\\
& = (n + \mu(d) + k + 2l -3 ) - \Sigma_i \abs{\gamma_i}-nk.
\end{align*}

To define open Gromov-Witten invariants we need the following property of relative orienations.
\begin{lem}
\label{pull back relative orientation}
Let $f:X \to A,\quad g:Y \to A$ be smooth transverse maps.
A relative orientation \( \det (TX) \iso f^*\det(TA) \) of $f$ induces a relative orientation \( \det (T (X \times_A Y) ) \iso p^* \det (TY) \) of the projection \( p:X \times_A Y \to Y \).
\end{lem}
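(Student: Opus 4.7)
The plan is to build the desired relative orientation of $p$ from the standard short exact sequence describing the tangent bundle of a transverse fibered product, combined with the given relative orientation of $f$. Write $p_1 : X \times_A Y \to X$ and $p=p_2 : X \times_A Y \to Y$ for the two projections. Transversality of $f$ and $g$ guarantees that $X \times_A Y$ is a smooth manifold and that the sequence
\begin{equation*}
0 \to T(X \times_A Y) \to p_1^*TX \oplus p^*TY \to (f \circ p_1)^*TA \to 0,
\end{equation*}
with first arrow the inclusion and second arrow $(v,w) \mapsto df(v) - dg(w)$, is exact.

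Next, I would take the determinant of this short exact sequence to obtain the canonical isomorphism
\begin{equation*}
\det T(X \times_A Y) \otimes (f \circ p_1)^*\det TA \iso p_1^*\det TX \otimes p^*\det TY.
\end{equation*}
Pulling back the given relative orientation of $f$ along $p_1$ supplies an isomorphism $p_1^*\det TX \iso (f \circ p_1)^*\det TA$. Substituting this into the previous display and canceling the common factor $(f \circ p_1)^*\det TA$ yields the sought isomorphism $\det T(X \times_A Y) \iso p^*\det TY$.

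The step I expect to require the most care is not the existence argument, which is essentially formal, but rather the verification that the constructed isomorphism is well defined up to homotopy independently of conventions. The two choices that enter the recipe are the ordering of the summands in $p_1^*TX \oplus p^*TY$ and the sign convention for the determinant of a short exact sequence; each affects the outcome only by a globally constant sign, so consistency is a matter of fixing a single parity once and for all. Once those conventions are pinned down, the fact that the isomorphism depends only on the homotopy class of the given relative orientation of $f$, together with naturality under restriction to open subsets and base change, follows from the functoriality of the determinant line.
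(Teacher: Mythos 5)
Your argument is correct: the short exact sequence $0 \to T(X \times_A Y) \to p_1^*TX \oplus p^*TY \to (f \circ p_1)^*TA \to 0$ coming from transversality, followed by taking determinants, pulling back the relative orientation of $f$ along $p_1$, and cancelling the factor $(f \circ p_1)^*\det TA$, is exactly the standard construction this lemma calls for. The paper states the lemma without proof, so there is nothing to compare against, but your proof (including the remark that the only ambiguity is a global sign fixed by convention, so the result depends only on the homotopy class of the given relative orientation) is what the authors evidently intend.
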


Assume for the next paragraph that the conditions of Theorem \ref{relative orientation} hold.
Then we have a relative orientation
\[ \det(T \M^D_{k,l}(X/L,d)) \iso \otimes_{i=1}^{k} evb_i^* \det(TL), \]
so by Lemma \ref{pull back relative orientation} we have a relative orientation
\[ \det(T P  ) \iso p_{1}^* \det(T (M_1 \times \cdots \times M_k  \times pt \times \cdots \times pt)). \]
Therefore, since the manifold $M_1 \times \cdots \times M_k$ is oriented, so is $P.$ We denote the orientation by $o.$

We define  $OGW_{d,k} (\gamma_1 \otimes \cdots \otimes \gamma_k)$ as follows.
\begin{itemize}
\item If $\dim(P) \neq 0$, set $OGW_{d,k} (\gamma_1 \otimes \cdots \otimes \gamma_k)=0$.
\item If $\dim(P) = 0$ then $k = \mu(d) +1 \mod{2}$ as required by Theorem~\ref{relative orientation}, and $P$ is oriented. So, we define $OGW_{d,k} (\gamma_1 \otimes \cdots \otimes \gamma_k) =  \sum_{p \in P} o(p)$.
\end{itemize}
We say an almost complex structure $J$ is \emph{$\phi$-invariant} if $\phi^* J = -J$. For generic $\phi$-invariant $J,$ the last sum is finite and well defined by the following theorem from~\cite{jake1}.
\begin{thm}
The numbers $OGW_{d,k} (\gamma_1\otimes \cdots \otimes \gamma_k)$ are finite and independent of the choice of generic $\phi$-invariant $J$, the points $S_i$ on $L$ and the maps $R_i:M_i \to X$ representing the duals of $\gamma_i$.
\end{thm}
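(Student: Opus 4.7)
The plan is to prove finiteness and independence by the standard Gromov-compactness and parametric cobordism framework, with the crucial technical input being the cancellation of boundary-node contributions afforded by the $Pin^-$ structure $\mathfrak{p}_L$ via Theorem~\ref{relative orientation}.

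For finiteness, I would apply Gromov compactness to $P$. Any sequence of points in $P$ is a sequence of $J$-holomorphic discs satisfying the $R_i$ and $S_i$ constraints; after passing to a subsequence it converges to a stable disc-and-sphere configuration whose boundary evaluations still hit the $S_i$ and whose interior evaluations still hit the $R_i$. Strong semi-positivity of $(X,\omega,\phi)$, together with a Maslov/Chern dimension count for the strata of the compactification in the presence of a sphere or interior-node bubble, shows that for generic $\phi$-invariant $J$ such configurations appear in strictly negative virtual dimension and hence do not occur when $\dim P = 0$. Only configurations with boundary bubbling remain, and these live in codimension $1$ of the full compactification; a parallel dimension count with $\dim P = 0$ excludes them as well for generic $S_i, R_i$. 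Hence $P$ is a compact oriented $0$-manifold and the signed count is finite.

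Independence is proved one choice at a time by a cobordism argument. Fix two generic choices of the data (say two generic $J_0,J_1$, keeping $S_i,R_i$ fixed) and join them by a generic $\phi$-invariant path $J_t$. The parametric moduli space $\M^D_{k,l}(X/L,d;\{J_t\})$, fiber-producted against $R$, is a $1$-dimensional manifold $\tilde P$ whose boundary in its natural compactification is
\[
\d \tilde P = P_{J_1} - P_{J_0} + (\text{codimension-}1\text{ degenerations along }\{J_t\}).
\]
Since $\tilde P$ is oriented (Lemma~\ref{pull back relative orientation} applied to the parametric version of Theorem~\ref{relative orientation}), the signed count of $\d \tilde P$ vanishes. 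The same template treats variations of $S_i$ or $R_i$ through generic $1$-parameter families.

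The main obstacle, and the point at which open Gromov-Witten theory genuinely diverges from the closed case, is showing that the codimension-$1$ boundary contributions to $\d\tilde P$ cancel. Unlike sphere bubbling, disc boundary-node degeneration has real codimension $1$, so such strata a priori contribute. The resolution, carried out in~\cite{jake1}, is that the fixed $Pin^-$ structure $\mathfrak{p}_L$ induces coherent orientations on the strata of $\overline{\M}^D_{k,l}(X/L,d)$ with the property that the two ways of distributing the $k$ boundary marked points across a boundary node yield opposite induced orientations on $\tilde P$ near that stratum. Consequently these contributions pair up and cancel, leaving $\d \tilde P = P_{J_1} - P_{J_0}$ and yielding the invariance. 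This is precisely where the parity hypothesis $k \equiv \mu(d)+1 \pmod 2$ built into Theorem~\ref{relative orientation} plays its role, and where the restriction to $\dim X = 4$ (or the orientability assumption in $\dim X = 6$) becomes essential.
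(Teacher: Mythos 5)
The paper does not actually prove this theorem --- it is quoted verbatim from~\cite{jake1} --- so I am comparing your outline against the argument given there. Your overall template (Gromov compactness for finiteness, a parametric cobordism for invariance, and a separate treatment of the real-codimension-one boundary strata) is the correct one, and your dimension counts excluding sphere and interior-node bubbles under semi-positivity for generic $J$ are standard and fine.

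The genuine gap is in the mechanism you give for the cancellation of the codimension-one disc-bubbling strata, which is the heart of the whole theorem. You assert that ``the two ways of distributing the $k$ boundary marked points across a boundary node yield opposite induced orientations.'' Redistributing marked points produces \emph{distinct} strata of the compactification, each of which must be handled on its own; there is no a priori pairing between them, and for a general Lagrangian these boundary contributions do \emph{not} cancel --- this is precisely why open Gromov--Witten invariants cannot be defined for an arbitrary pair $(X,L)$. The actual cancellation in~\cite{jake1} uses the anti-symplectic involution in an essential way: a boundary stratum consisting of two discs $(u_1,u_2)$ joined at a boundary node is paired with the stratum obtained by replacing one component, say $u_1$, by $\phi\circ u_1\circ c$, where $c$ is complex conjugation on the disc. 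Because $\phi^*J=-J$, the reflected map is again $J$-holomorphic; because the degree is only prescribed as an element of $H_2^\phi(X,L;\Z)$ (the projection to the $(-1)$-eigenspace), the reflected configuration contributes to the same moduli problem; and because the interior constraints are taken in $H^*_\phi(X;\Q)$ and the boundary constraints lie on $L=\Fix(\phi)$, the reflected configuration satisfies the same incidence conditions. Only then do the $Pin^-$ structure and the parity condition $k\equiv\mu(d)+1\pmod 2$ enter, to show that the two paired configurations carry opposite signs. Your argument never invokes the $\phi$-invariance of $J$, of the homology class, or of the constraints at the one point where they are indispensable, so as written the crucial step would fail.
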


\subsection{Axioms for open Gromov-Witten invariants}
\label{axioms}
The open Gromov-Witten invariants satisfy the following properties analogous to the Kontsevich-Manin axioms. We use the fact that the natural map
\begin{equation}\label{eq:isoj}
H_2^\phi(X;\Q) \to H_2^\phi(X,L,\Q)
\end{equation}
is an isomorphism. This follows from the exact sequence of the pair $(X,L)$ because $\phi$ acts trivially on $H_2(L).$
\begin{prop}(The open Gromov-Witten axioms.)\\
\begin{enumerate}
\item Symmetry:
 $OGW_{d,k}(\gamma_1 \otimes \ldots \otimes \gamma_l)$ is $\Z_2$-graded symmetric with respect to permutations of the constraints $\gamma_1, \ldots, \gamma_l$.
\item Grading:  $OGW_{d,k} (\gamma_1\otimes \cdots \otimes \gamma_l)=0$ unless
\begin{align*}
\Sigma \abs{\gamma_i}  + k \dim_\R L=  \dim_\C X + \mu(d) + 2l + k -3. \label{open grading}
\end{align*}
\item Fundamental class: Let $\tau_0 \in H^0_\phi (X)$ be the unit. Then
\[
OGW_{d,k} (\gamma_1 \otimes \ldots \otimes \gamma_l \otimes \tau_0 ) =  \begin{cases}  1 & \text{if d=0, k=1 and l=0}, \\ 0 & \text{otherwise.} \end{cases}
\]

\item Divisor: for $\gamma \in H^2_\phi (X)$, we have
\[
OGW_{d,k} (\gamma_1 \otimes \cdots \otimes \gamma_l \otimes \gamma) = \left( \int_d \gamma \right) \cdot OGW_{d,k} (\gamma_1 \otimes \cdots \otimes \gamma_l),\]
where $\int_d\gamma$ is defined by identifying $d$ with an element of $H_2^\phi(X)$ using isomorphism~\eqref{eq:isoj}.
\end{enumerate}
\end{prop}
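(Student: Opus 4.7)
The plan is to adapt the classical Kontsevich-Manin proofs (see e.g.\ \cite{MS}) to the moduli of $J$-holomorphic discs, with extra care for orientations coming from $\mathfrak{p}_L$ via Theorem \ref{relative orientation}. All four axioms reduce to standard fiber-product and forgetful-map arguments, with the orientation of $P$ propagated through Lemma \ref{pull back relative orientation}.

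Symmetry and Grading are short. The symmetric group $S_l$ acts on $\M^D_{k,l}(X/L,d)$ by permuting interior marked points, preserving the relative orientation of Theorem \ref{relative orientation} since the boundary data is untouched; reordering the factors $M_i$ in the fiber product $P$ introduces the expected Koszul signs, giving $\Z_2$-graded symmetry. For Grading, the dimension formula computed in the text gives $\dim P = n+\mu(d)+k+2l-3-\sum_i|\gamma_i|-nk$; since $\dim_\R L = n = \dim_\C X$, setting $\dim P = 0$ recovers the stated relation, and by definition $OGW$ vanishes when $\dim P \neq 0$.

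For the Fundamental class, I would choose $R_{\tau_0}:X\to X$ to be the identity, so the $\tau_0$-constraint is vacuous. Outside the exceptional stratum $(d,k,l)=(0,1,0)$, the forgetful map $\pi:\M^D_{k,l+1}(X/L,d)\to\M^D_{k,l}(X/L,d)$ dropping the $\tau_0$-marked point has positive-dimensional fibers (a copy of the disc, possibly with nodal degenerations), hence $\dim P>0$ and the invariant vanishes. In the exceptional case, $\M^D_{1,1}(X/L,0)$ is parametrized by the image point in $L$ with $evb_1 = evi_1$ the identity map; transverse intersection with a point $S_1\in L$ yields a single positively oriented point, giving $OGW_{0,1}(\tau_0)=1$.

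For the Divisor axiom, apply the analogous forgetful map $\pi$ dropping the marked point carrying $\gamma$, with a codimension-$2$ representative $R_\gamma:M_\gamma\to X$. A generic fiber of $\pi$ is a single disc $u$ parametrized by the position of the forgotten interior point in $D^2$, and the $\gamma$-constraint selects $u^{-1}(R_\gamma)\subset D^2$. Signed counting equals $\int_{D^2}u^*\gamma$, a topological invariant of the relative class which, after identifying $H_2^\phi(X,L;\Z)$ with $H_2^\phi(X;\Z)$ via~\eqref{eq:isoj}, equals $\int_d\gamma$; this factor is the same for every $u$ contributing to the moduli integral, so it pulls out to give the divisor formula. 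The main technical obstacle throughout is orientation bookkeeping: ensuring Theorem~\ref{relative orientation} and Lemma~\ref{pull back relative orientation} propagate coherently through the forgetful maps so that no spurious signs appear and the $Pin^-$ structure is compatibly used on each stratum. Once this is set up, the proofs mirror the closed case.
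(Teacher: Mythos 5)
The paper does not actually prove this proposition: it is quoted from \cite{jake1,jake2} with only the remark that the map \eqref{eq:isoj} is an isomorphism, so there is no in-text proof to compare against. Your sketch is the standard reconstruction (permutation action for Symmetry, the $\dim P$ computation for Grading, forgetful/fiber arguments for the last two axioms), and for Symmetry and Grading it is essentially complete --- note that for Fundamental class you do not even need the forgetful map with its destabilization issues: taking $R_{\tau_0}=\id_X$ makes $P$ fiber over the corresponding fiber product $P'$ without that marked point with $2$-real-dimensional fibers, so $\dim P=0$ forces $P'$ to be cut out in negative expected dimension, hence empty for generic data, except in the single case where the constant-disc moduli space $\M^D_{1,1}(X/L,0)\cong L$ is stable only because of the extra point.

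The one genuine gap is in the Divisor axiom, at the phrase ``$\int_{D^2}u^*\gamma$, a topological invariant of the relative class.'' For a general $\gamma\in H^2(X;\Q)$ this is false: an absolute degree-$2$ class does not pair with a class in $H_2(X,L;\Z)$, and concretely $\int_{D^2}u^*\gamma$ changes by $\int_{\partial D^2}u^*\eta$ under a change of cocycle representative, which need not vanish when $\gamma|_L\neq 0$. The statement is rescued precisely by the hypothesis $\gamma\in H^2_\phi(X;\Q)$: since $\phi$ fixes $L$ pointwise, $\phi^*\gamma=-\gamma$ forces $\gamma|_L=0$ in $H^2(L;\Q)$, so $\gamma$ lifts to $H^2(X,L;\Q)$, and the doubling computation $\int_{\hat u}\gamma=\int_{D^2}u^*\gamma+\int_{D^2}(\phi\circ\conj{u})^*\gamma=2\int_{D^2}u^*\gamma$ (using anti-invariance of $\gamma$ together with the orientation reversal of $\conj{u}$) identifies the fiber count with $\tfrac12\int_{\tilde d}\gamma=\int_d\gamma$ under \eqref{eq:isoj}. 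This is the only point where the restriction to $\phi$-anti-invariant constraints is actually used, and it also shows the count depends only on the image of $[u]$ in $H^\phi_2(X,L)$ rather than on the full relative class, which your argument tacitly assumes when pulling the factor out of the sum. You are right that the remaining work is orientation bookkeeping through Theorem \ref{relative orientation} and Lemma \ref{pull back relative orientation}; flagging it without carrying it out is acceptable for a sketch, since those two statements are exactly the tools the paper supplies for that purpose.
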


\subsection{WDVV equations}
The WDVV equations are a concise way of writing down further relations among Gromov-Witten invariants that come from non-linear gluing theory. In this section we give the relevant definitions and formulate the WDVV equations for closed and open Gromov-Witten invariants.

Let $(X,\omega)$ be a symplectic manifold of dimension $4.$ The closed Gromov-Witten potential of $X$ is a formal power series encoding the closed Gromov-Witten invariants of $X.$
Let $t_0,\ldots,t_m \in H^*(X;\Q)$ be a basis for the rational cohomology of $X$.
Let \( w = (w_{0},\ldots,w_{m}) \) be a collection of formal variables, and let
\[
\delta_{w} = w_{0} t_{0} + \cdots + w_{m} t_{m}
\]
be the corresponding formal cohomology class. Let $T$ be an additional formal variable. The closed Gromov-Witten potential is the formal power series
\begin{equation}\label{eq:Phi}
\Phi(w) = \sum_{d,n} \frac{T^{\int_d\omega}}{n!} GW_{d}(\delta_{w}^{\otimes n}).
\end{equation}
The variable $T$ is necessary to take care of convergence issues.
We denote by $(g_{ij})$ the matrix with entries
\[
g_{ij} = \begin{cases} \int_{X} t_{i} \wedge t_{j} & \text{if } t_{i} \wedge t_{j} \in H^{4}(X), \\ 0 & \text{otherwise,} 	 \end{cases}
\]
and we denote by $(g^{ij})$ the inverse matrix.

In the following theorems, we use the Einstein summation convention. Mathematical proofs of the following theorem first appeared in~\cite{RT} and~\cite{MSO}.
\begin{thm}
Let $i,j,k,l \in \set{0,\ldots,m}$. Then the WDVV equation holds,
\[
 \d_i \d_j \d_{\nu} \Phi \cdot g^{\nu \mu} \cdot \d_{\mu} \d_k \d_l  \Phi
=  \d_j \d_k \d_{\nu} \Phi \cdot g^{\nu \mu} \cdot \d_{\mu} \d_i \d_l  \Phi,
\]
where $\d_i$ differentiation with respect to $w_i$.
\end{thm}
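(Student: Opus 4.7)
My strategy is to reduce the WDVV equation to the classical linear equivalence of the three boundary points of $\overline{\M}_{0,4} \cong \CP^1$, invoking the splitting axiom for genus zero Gromov-Witten invariants. The proof has three stages: unpacking the derivatives into generating series of GW invariants with four distinguished marked points, identifying each side with the integral of evaluation classes over a boundary divisor of the moduli space of stable maps with four marked points, and then using the rational equivalence of these boundary divisors.

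First I would expand the triple derivatives using~\eqref{eq:Phi}:
\[\d_i \d_j \d_\nu \Phi = \sum_{d,n} \frac{T^{\int_d\omega}}{n!} GW_d(t_i \otimes t_j \otimes t_\nu \otimes \delta_w^{\otimes n}),\]
and analogously for $\d_\mu \d_k \d_l \Phi$. Forming the product, collecting powers of $T$ by $d = d_1 + d_2$, and partitioning the background marked points as $A \sqcup B = \{1,\ldots,n\}$, the LHS of WDVV becomes
\[\sum_{d,n} \frac{T^{\int_d\omega}}{n!} \sum_{\substack{d_1+d_2=d \\ A \sqcup B = \{1,\ldots,n\}}} GW_{d_1}(t_i \otimes t_j \otimes t_\nu \otimes \delta_w^{\otimes |A|})\, g^{\nu\mu}\, GW_{d_2}(t_\mu \otimes t_k \otimes t_l \otimes \delta_w^{\otimes |B|}).\]
By the splitting axiom, this double sum equals the integral of the product of evaluation classes over the boundary divisor
\[D_{12|34} \subset \overline{\M}_{0,n+4}(X,d)\]
parametrizing stable maps with two-component nodal domain carrying distinguished marked points $1,2$ (labeled by $t_i, t_j$) on one component and $3,4$ (labeled by $t_k, t_l$) on the other, with the remaining marked points distributed freely and with the factor $g^{\nu\mu}$ coming from the diagonal class at the node. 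The analogous factorization over $D_{14|23}$ recovers the RHS.

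The final step is the purely topological observation that $D_{12|34}$ and $D_{14|23}$ are rationally equivalent in $\overline{\M}_{0,n+4}(X,d)$. This uses the forgetful morphism
\[\pi : \overline{\M}_{0,n+4}(X,d) \to \overline{\M}_{0,4} \cong \CP^1\]
that retains only the first four marked points and stabilizes. The boundary of $\overline{\M}_{0,4}$ consists of three points, corresponding to the partitions $\{12|34\}$, $\{13|24\}$, $\{14|23\}$; any two points on $\CP^1$ are linearly equivalent, and their $\pi$-pullbacks are precisely the divisors $D_{ij|kl}$. The main technical obstacle is not this topological input but the analytic justification of the splitting axiom itself — that intersection with a boundary stratum really does factor as a product of lower Gromov-Witten invariants with correct signs — which requires the symplectic gluing theory of $J$-holomorphic stable maps developed in \cite{RT} and \cite{MSO}, and which I would cite rather than reprove.
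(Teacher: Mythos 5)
Your outline is the standard proof — reduce to the linear equivalence of the boundary points of $\overline{\M}_{0,4}$ via the splitting axiom and the forgetful map, with the analytic content (gluing) deferred to \cite{RT} and \cite{MSO} — and it is correct; note that the paper itself gives no proof of this theorem but simply cites those same references, where exactly this argument is carried out. (The only detail you elide is the Koszul signs needed when $X$ has odd cohomology, which is immaterial for the blowups of $\CP^2$ considered here.)
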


For a real symplectic manifold $(X,\omega,\phi)$ of dimension $\dim X=4,$ the open Gromov-Witten invariants and closed Gromov-Witten invariants together satisfy the open WDVV equations. Let $\tau_0,\ldots,\tau_m \in H^*_\phi(X;\Q)$ be a basis.
Let \( w = (w_{0},\ldots,w_{m}) \) be a collection of formal variables, and let
\[
\delta_{w} = w_{0} \tau_{0} + \cdots + w_{m} \tau_{m}
\]
be the corresponding formal cohomology class. We still define the closed Gromov-Witten potential $\Phi(w)$ by equation~\eqref{eq:Phi}, but now $\delta_w$ takes values only in the subspace $H^*_\phi(X;\Q) \subset H^*(X;\Q).$

Let $T$ and $u$ be additional formal variables. The open Gromov-Witten potential is the formal power series
\[
\Omega(w,u) = \sum_{d,k,l} \varepsilon(d) \frac{T^{\int_d\omega}u^{k}}{k! l!} OGW_{(d,\al),k}(\delta_{w}^{\otimes l}),
\]
where $\varepsilon$ is given by
\[
\varepsilon(d) = \begin{cases} +1 & \text{ if } \mu(d) = 0 \mod 2,  \\  \sqrt{-1} & \text{ if } \mu(d) =1 \mod 2. \end{cases}
\]
The following theorem is due to~\cite{jake2}.
\begin{thm} \label{open WDVV}
For every $a,b,c\in \set{1,\ldots,m}$ the following open WDVV equations hold.
\begin{enumerate}
\item
$\d_{a} \d_{b} \d_{i} \Phi g^{ij} \d_{j} \d_{c} \Omega + \d_{a} \d_{b} \Omega \d_{c} \d_{\star} \Omega  = \d_{c} \d_{b} \d_{i} \Phi g^{ij} \d_{j} \d_{a} \Omega + \d_{c} \d_{b} \Omega \d_{a} \d_{\star} \Omega.$
\item
$\d_{a} \d_{b} \d_{i} \Phi g^{ij} \d_{j} \d_{\star} \Omega + \d_{a} \d_{b} \Omega \d^{2}_{\star} \Omega  = \d_{a} \d_{\star} \Omega \d_{b} \d_{\star} \Omega.$
\end{enumerate}
Here $\d_{\star}$ denotes differentiation with respect to $u$ and $\d_i$ denotes differentiation with respect to $w_i$.
\end{thm}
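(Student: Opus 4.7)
The plan is to derive both open WDVV equations as boundary-cancellation identities on appropriate one-dimensional moduli spaces of constrained $J$-holomorphic discs, in direct analogy with the proof of the classical WDVV equation \cite{RT,MSO}. The classical case uses a forgetful map to $\overline{\M}_{0,4}$ and matches the contributions from its three boundary strata; here we use disc moduli spaces with distinguished marked points and match contributions from the degenerations of the resulting compactified 1-manifold.

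For equation (1), I consider moduli spaces $\overline{\M}^D_{k, l}(X/L, d)$ carrying three distinguished interior marked points constrained by the Poincar\'e duals of $\gamma_a, \gamma_b, \gamma_c$, one distinguished boundary marked point constrained to a fixed point of $L$, additional interior marked points constrained by the formal class $\delta_w$, and additional boundary marked points constrained to fixed points of $L$. For each $(d,k,l)$ such that the resulting constrained moduli is one-dimensional, one obtains an oriented compact 1-manifold whose signed boundary count vanishes. Assembling these counts with the weights defining $\Phi$ and $\Omega$, the codimension-one degenerations fall into two types:
\begin{enumerate}
\item a sphere bubble carrying two of the distinguished interior marks pinches off from the disc, and smoothing the interior node by the diagonal class $\sum_{ij} g^{ij} t_i \otimes t_j$ produces the closed-open product $\partial_a\partial_b\partial_i\Phi \cdot g^{ij} \cdot \partial_j\partial_c\Omega$;
\item a boundary node develops and the disc splits into two discs sharing a boundary marking, producing the open-open product $\partial_a\partial_b\Omega \cdot \partial_c\partial_\star\Omega$.
\end{enumerate}
The two ways to group the distinguished interior marks ($\{z_a, z_b\}$ versus $\{z_c, z_b\}$) produce four boundary strata in total, whose signed sum vanishes; rearranging gives equation (1). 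For equation (2) I apply the same strategy to a configuration with fewer distinguished interior marks and additional distinguished boundary marks; the resulting 1-manifold has three codimension-one degeneration types --- a sphere bubble, a disc bubble that keeps both distinguished boundary marks on one component, and a disc bubble that distributes one distinguished boundary mark to each component --- and these produce the three terms of the equation.

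The principal technical difficulty is the sign analysis. Using the relative orientation of Theorem \ref{relative orientation} together with Lemma \ref{pull back relative orientation}, one must verify that the induced orientation on each boundary stratum factors correctly as a product of the orientations on the component moduli spaces, and that the signs combine with the weights in $\Phi$ and $\Omega$ to reproduce the stated equations. The weight $\varepsilon(d) = \sqrt{-1}^{\,\mu(d)}$ is precisely what ensures multiplicativity across disc-bubble strata: when a disc of class $d$ splits along a boundary node into discs of classes $d_1, d_2$ with $d_1 + d_2 = d$, additivity of the Maslov index gives $\varepsilon(d_1)\varepsilon(d_2) = \sqrt{-1}^{\,\mu(d_1) + \mu(d_2)} = \varepsilon(d)$, so the generating-function coefficients multiply as expected. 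Sphere-bubble degenerations need no such correction since closed classes have even Maslov index. A final input, provided by the transversality package underlying the construction of the invariants in \cite{jake1}, is the exclusion of contributions from strata containing ghost components or reducible discs with nontrivial automorphism.
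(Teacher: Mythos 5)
The paper does not prove this theorem: it is imported verbatim from the second author's preprint \cite{jake2} (``The following theorem is due to...''), so there is no internal argument to compare yours against. Your sketch does follow what is, as far as is known, the strategy of \cite{jake2} and of the closed-case proofs in \cite{RT,MSO}: realize each term as a codimension-one boundary stratum of a one-dimensional moduli space of constrained discs and equate the signed boundary counts. At that level of resolution the outline is reasonable.

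However, there is a concrete error in the one place where you commit to a computation. With the paper's definition, $\varepsilon(d)$ equals $1$ or $\sqrt{-1}$ according to the parity of $\mu(d)$; it is \emph{not} $\sqrt{-1}^{\,\mu(d)}$, and your claimed identity $\varepsilon(d_1)\varepsilon(d_2)=\varepsilon(d_1+d_2)$ fails: when $\mu(d_1)$ and $\mu(d_2)$ are both odd one gets $\varepsilon(d_1)\varepsilon(d_2)=-1$ while $\varepsilon(d_1+d_2)=+1$. This is not a typo you can absorb --- the nontrivial ratio $\varepsilon(d')\varepsilon(d'')/\varepsilon(d)$ appears explicitly as a sign in every recursion \eqref{OGW1}--\eqref{OGW5b} that the paper extracts from this theorem. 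The factor $\varepsilon$ exists precisely to encode a sign produced by the boundary-orientation analysis at disc-splitting strata, so by asserting that the weights ``multiply as expected'' you have assumed away exactly the sign you needed to track; the argument as written would produce equations with the wrong relative signs between the open-open and closed-open terms. Beyond that, the proposal remains a strategy statement rather than a proof: you do not identify which constrained moduli are one-dimensional, nor derive why the interior node is smoothed by the diagonal $g^{ij}t_i\otimes t_j$ while the boundary node places a point constraint on exactly one of the two disc components (which is what makes $\d_c\d_\star\Omega$ rather than $\d_c\Omega\cdot\d_\star(\cdot)$ appear), nor explain the asymmetry whereby $\d_\star^2\Omega$ occurs in equation (2) but not in (1); and the orientation analysis you correctly flag as the principal difficulty is left entirely undone.
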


\section{Real blowups of the projective plane}
\label{blowup}
Choose a configuration $C$ of $r$ real points $x_1,\ldots,x_r,$ and $s$ pairs of conjugate points $y_1,\conj{y_1},\ldots,y_s,\conj{y_s},$  on $\C P^2$.
Let $X$ be the blowup of $\C P^2$ at the $r+2s$ points of $C$.
The complex manifold $X$ comes with a standard family of K\"ahler forms. See Chapter 1 of~\cite{GH}. For the following we take our symplectic form to be any of the standard K\"ahler forms on $X$ such that complex conjugation on $\CP^2$ lifts to an anti-symplectic involution
\begin{align*}
 \phi: X \to X, \qquad \phi^* \omega = - \omega.
\end{align*}
For the purposes of this paper, the choice does not matter because Gromov-Witten type invariants depend only on the deformation class of $\omega,$ and the family of K\"ahler forms of a fixed complex structure is convex. Deformation invariance also implies that the choice of blowup points does not matter. By extension, we often call $\phi$ conjugation. We write $L = \Fix(\phi) \subset X.$

\subsection{Homology classes of the blowup }
It will be useful to choose explicit submanifolds of $X$ that will represent the generators of $H_2(X,L;\Z)$. Let $\Line \subset X$ be the strict transform of a conjugation invariant line in $\C P^2$ not passing through any of the points of $C$. Let $\ExDiv_1, \ldots,\ExDiv_r,$ be the exceptional divisors of the $r$ real blowup points $x_1,\ldots,x_r$.
Let $\ExDiv_{r+1},\ldots,\ExDiv_{r+s},$ be the exceptional divisors of the $s$ blowup points $y_1,\ldots,y_s$, and let $\ExDiv_{r+s+1},\ldots,\ExDiv_{r+2s},$ be the exceptional divisors of the conjugate blowup points  $\conj{y_1},\ldots,\conj{y_s}$.

The Lagrangian $L$ splits $\Line$ into two hemispheres.
We denote one of these hemispheres by $H$, and give $H$ the orientation induced by the complex structure of $\Line$.
We denote by $\conj{H} = \phi(H)$ the image of $H$ under complex conjugation.
Similarly, the real submanifold $L$ splits the exceptional divisors $\ExDiv_1, \ldots, \ExDiv_r$ into two hemispheres.
We denote one of these hemispheres by $F_i$, and give it the orientation induced by the complex structure of the exceptional divisor $\ExDiv_i$.
As before, we denote the image of $F_i$ under complex conjugation by $\conj{F_i}$.

The submanifolds $H,F_i, \conj{F_i}, \ExDiv_{r+j}, \ExDiv_{r+s+j},$ represent relative homology classes
\begin{align*}
[H],[\conj{H}],[F_1],[\conj{F}_1],\ldots,[F_r],[\conj{F}_r], [\ExDiv_{r+1}],\ldots, [\ExDiv_{r+2s} ]  \in H_2(X,L;\Z).
\end{align*}
The relative homology groups $H_*(X,L)$ are given by the following lemma.
\begin{lem}
\(H_4(X,L;\Z) \simeq \Z\) with generator the image of the fundamental class under $H_4(X) \to H_4(X,L)$,
\begin{align*}
H_2(X,L;\Z) \simeq
\frac{\Z\set{[H],[\conj{H}],[F_1],[\conj{F_1}], \ldots, [F_r],[\conj{F_r}],[\ExDiv_1],\ldots,[\ExDiv_{r+2s}]}}
{[H]-[\conj{H}] = \sum_i [F_i]-[\conj{F_i}]}
\end{align*}
and \( \forall n \neq 2,4: H_n(X,L;\Z) = 0\).
\end{lem}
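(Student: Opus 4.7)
The plan is to run the long exact sequence of the pair $(X,L)$ with integer coefficients. The first step is to identify $L$ topologically. Since $\phi$ restricts to the standard complex conjugation on $\CP^2$ with fixed set $\RP^2$, and since blowing up a conjugate pair of points contributes no new fixed points (the two exceptional divisors are swapped by $\phi$), while each real blowup replaces a small disk in $L$ by the M\"obius band neighborhood of the real $\RP^1$ inside the exceptional $\CP^1$, the Lagrangian $L$ is the closed non-orientable surface $\#^{r+1}\RP^2$. In particular, $H_0(L;\Z)=\Z$, $H_1(L;\Z)=\Z^r\oplus\Z/2$, $H_2(L;\Z)=0$.

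Combining this with the standard homology of $X$ (a simply-connected $4$-manifold with $H_2(X;\Z)=\Z^{r+2s+1}$ generated by $[\L]$ and the $[\ExDiv_i]$, $H_3(X)=0$, $H_4(X)=\Z$), the long exact sequence immediately gives $H_n(X,L)=0$ for $n\in\{0,1,3\}$ and $n\geq 5$, and $H_4(X,L)\cong\Z$ with the stated generator. In degree $2$, the vanishing of $H_2(L)$ and $H_1(X)$ leaves the short exact sequence
\[
0\to H_2(X)\to H_2(X,L)\xrightarrow{\partial}H_1(L)\to 0.
\]

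The image of $j_*: H_2(X)\to H_2(X,L)$ is generated by the classes $[\L]$ and the $[\ExDiv_i]$, which split into sums $[H]+[\conj H]$ and $[F_i]+[\conj F_i]$ for $i\leq r$. The hemispheres $[H]$ and $[F_i]$ themselves lift into $H_2(X,L)$ and project under $\partial$ to the ``cross-cap'' generators $[\ell_H]$ and $[\ell_{F_i}]$ of $H_1(\#^{r+1}\RP^2)$, which satisfy the single surface relation $2([\ell_H]+\sum_i[\ell_{F_i}])=0$. Lifting this relation through $\partial$, with the orientations coming from the complex structures on $\L$ and $\ExDiv_i$, yields the stated identity $[H]-[\conj H]=\sum_i([F_i]-[\conj F_i])$ in $H_2(X,L)$. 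The main obstacle is checking that the short exact sequence above is maximally non-split, so that the resulting presentation of $H_2(X,L)$ is free abelian with no extra $\Z/2$-torsion. Concretely, one verifies that the lift $\tilde\eta=[H]+\sum_i[F_i]$ of the $\Z/2$-torsion generator of $H_1(L)$ has infinite order in $H_2(X,L)$, since $2\tilde\eta$ expresses as a nonzero combination of classes in $j_*(H_2(X))$ via $2[H]=[\L]+([H]-[\conj H])$ and $2[F_i]=[\ExDiv_i]+([F_i]-[\conj F_i])$. Carrying out this sign bookkeeping, using the antiholomorphicity of $\phi|_\L$ and $\phi|_{\ExDiv_i}$, is the main calculation.
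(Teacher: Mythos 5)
Your overall strategy---identify $L\cong\RP^2\#\cdots\#\RP^2$ ($r+1$ summands), run the long exact sequence of $(X,L)$, and reduce everything to the extension $0\to H_2(X)\to H_2(X,L)\xrightarrow{\partial}H_1(L)\to 0$---is sound, and it correctly disposes of every degree except $2$ (the paper itself states the lemma without proof, so there is nothing to compare against). The gap sits in degree $2$, precisely at the two places you flag as ``the main calculation.'' First, lifting the surface relation $2(a_0+\cdots+a_r)=0$ through $\partial$ only tells you that $[H]-[\conj{H}]-\sum_i([F_i]-[\conj{F_i}])$ lies in $\ker\partial=j_*H_2(X)\cong\Z^{1+r+2s}$; it does not tell you that this class is \emph{zero}, which is what the lemma asserts. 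Deciding which element of $j_*H_2(X)$ it is requires input beyond the exact sequence: for instance, intersection numbers of $H$ and the $F_i$ against absolute cycles (already for $r=s=0$, proving $[H]=[\conj{H}]$ in $H_2(\CP^2,\RP^2)\cong\Z$ amounts to proving $[H]$ is a generator, which one sees by intersecting $H$ with the conjugation-invariant conic disjoint from $\RP^2$), or an excision/Mayer--Vietoris induction that builds $(X,L)$ from $(\CP^2,\RP^2)$ by replacing ball pairs with (disc bundle over $S^2$, M\"obius band) pairs.

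Second, the torsion argument is both incomplete and circular. An extension of $\Z^r\oplus\Z/2$ by a free abelian group is torsion-free if and only if $2\tilde\eta\notin 2\,j_*H_2(X)$ for one (hence any) lift $\tilde\eta$ of the torsion class; showing that $2\tilde\eta$ is a \emph{nonzero} element of $j_*H_2(X)$ only rules out torsion for that particular lift and says nothing about the lifts $\tilde\eta+a$ with $a\in j_*H_2(X)$. Moreover, your evaluation of $2\tilde\eta$ invokes $[H]-[\conj{H}]=\sum_i([F_i]-[\conj{F_i}])$, i.e.\ the very relation left unproved by the first point. Once that relation is established independently, your computation does yield $2\tilde\eta=[\Line]+\sum_{i\le r}[\ExDiv_i]$, which is primitive in $H_2(X)$ and hence not in $2\,j_*H_2(X)$, so torsion-freeness follows and the second gap is repaired by fixing the first. (Your identification of $L$, the resulting $H_1(L)\cong\Z^r\oplus\Z/2$, and the reading $[\ExDiv_i]=[F_i]+[\conj{F_i}]$ for $i\le r$---which reconciles the stated generator list with the expected rank $1+2r+2s$---are all correct.)
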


\subsection{The involution invariant homology groups of the blow\-up}
The conjugation $\phi:X\to X$ induces a conjugation map on the relative homology
$\phi_*: H_2(X,L;\Z) \to H_2(X,L;\Z)$.
\begin{lem}
$\phi_*$ acts on the generators of  $H_2(X,L;\Z)$ by
\begin{align*}
\phi_*: & [F_i] \mapsto [\conj{F}_i] \\
\phi_*: & [\ExDiv_{r+j}] \mapsto [\conj{\ExDiv_{r+j}}] = -[\ExDiv_{r+s+j}], \quad & j=1,\ldots, s, \\
\phi_*: & [\ExDiv_{r+s+j}] \mapsto [\conj{\ExDiv_{r+s+j}}] = -[\ExDiv_{r+j}], \quad & j=1,\ldots, s, \\
\phi_*: & [H] \mapsto [\conj{H}] = [H] - \sum_i ([F_i]-[\conj{F}_i]).
\end{align*}
\end{lem}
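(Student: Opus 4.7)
The plan is to verify each of the four formulae directly from the geometric definition of the generators, relying on the previous lemma only for the final one.

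For the real exceptional hemispheres $F_i$, the identity $\phi_*[F_i] = [\conj{F_i}]$ is essentially tautological: the paper defines $\conj{F_i}$ to be $\phi(F_i)$ as an oriented submanifold (with the push-forward orientation), so this formula just records the definition of $\phi_*$ on homology.

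For the complex exceptional divisors, the key point is orientation. The conjugation $\phi$ on $\CP^2$ interchanges $y_j$ and $\conj{y_j}$, so its lift to the blowup $X$ sends the fiber $\ExDiv_{r+j}$ over $y_j$ onto the fiber $\ExDiv_{r+s+j}$ over $\conj{y_j}$ as a set. Restricted to $\ExDiv_{r+j} \cong \CP^1$, the map $\phi$ is an anti-holomorphic isomorphism onto $\ExDiv_{r+s+j} \cong \CP^1$, and anti-holomorphic maps reverse the complex orientation on a Riemann surface. Since $\ExDiv_{r+j}$ and $\ExDiv_{r+s+j}$ carry their natural complex orientations, this yields $\phi_*[\ExDiv_{r+j}] = -[\ExDiv_{r+s+j}]$. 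The formula for $\phi_*[\ExDiv_{r+s+j}]$ then follows either by the same argument with indices swapped, or from the fact that $\phi$ is an involution so $\phi_*^2 = \id$.

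For the class $[H]$, the definition $\conj{H} = \phi(H)$ (with the push-forward orientation) immediately gives $\phi_*[H] = [\conj{H}]$. Substituting the relation $[H] - [\conj{H}] = \sum_i ([F_i] - [\conj{F}_i])$ from the previous lemma rewrites this in the stated form. The only nontrivial ingredient in the whole proof is the orientation-reversal in the complex exceptional divisor case, and this is a routine consequence of anti-holomorphicity, so I do not expect a real obstacle; the rest is bookkeeping of orientations and notation.
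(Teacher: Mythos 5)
Your argument is correct. The paper states this lemma without proof, and what you have written is the natural justification: $\phi_*[F_i]=[\conj{F}_i]$ and $\phi_*[H]=[\conj{H}]$ are definitional once one fixes the convention that $\conj{F}_i$ and $\conj{H}$ carry the push-forward orientations, the sign in $\phi_*[\ExDiv_{r+j}]=-[\ExDiv_{r+s+j}]$ comes exactly from the anti-holomorphicity of $\phi$ restricted to the conjugate pair of exceptional spheres, and the last line is just the relation from the preceding structure lemma. The only point worth making explicit is the one you already flag parenthetically: the paper never says which orientation $\conj{H}$ and $\conj{F}_i$ carry, and the push-forward convention is the one forced by the rest of the paper (e.g.\ the later computation $\mu([H]-[\conj{H}])=\mu([\Line])=2c_1([\Line])$ requires $[H]-[\conj{H}]$ to be the image of $[\Line]$ in $H_2(X,L)$), so stating that convention up front, as you do, is the right move.
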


We define generators of $H^\phi_2(X,L;\Z)$ by projecting the generators of $H_2(X,L;\Z)$ to $H^{\phi}_{2}(X;\Q):$
\begin{align*}
\tilde{H} & = \half([H]-[\conj{H}]),  \\
\tilde{F}_i & = \half([F_i]-[\conj{F}_i]), \quad & i=1,\ldots,r \\
\tilde{E}_j & = \half([\ExDiv_{r+j}]-[\conj{\ExDiv}_{r+j}])= \half([\ExDiv_{r+j}]+[\ExDiv_{r+s+j}]), \quad & j=1, \ldots,s, \\
G_j & = \half([\ExDiv_{r+j}]+[\conj{\ExDiv}_{r+j}])= \half([\ExDiv_{r+j}]-[\ExDiv_{r+s+j}]), \quad & j=1, \ldots,s.  \\
\end{align*}
The relative homology classes $\tilde{H}, \tilde{F}_i, \tilde{E}_j,$ freely generate the relative $\phi$-invariant homology group
\begin{align*}
H^\phi_2(X,L;\Z) \simeq \Z\set{\tilde{H}, \tilde{F}_1,\ldots, \tilde{F}_r, \tilde{E}_1,\ldots, \tilde{E}_1 }.
\end{align*}
They extend to a basis of $H_2(X,L;\Q)$ by adding $G_j$.

We use the following shorthand notation for elements of the homology groups of $X$:
\begin{itemize}
\item For multi-indices
\begin{align*}
a = (a_1,\ldots,a_r),\quad  b=(b_1,\ldots,b_s), \quad c= (c_1,\ldots,c_s), \qquad a_i, b_j, c_j \in \Z
\end{align*}
 and $d \in \Z$ we denote by
$(d,a,b,c)$ the homology class
\begin{align*}
(d,a,b,c) = d [\Line] - \sum_i a_i [\ExDiv_i]  - \sum_j b_j [\ExDiv_{r+j}] - \sum_j c_j [\ExDiv_{r+s+j}]\in H_2(X;\Z).
\end{align*}
\item For multi-indices
\[
\al=(\al_1,\ldots,\al_r),\quad \beta=(\beta_1,\ldots,\beta_s), \qquad \al_i, \beta_j  \in \Z,
\]
and $d\in \Z,$ we denote by $[d,\al,\beta]$ the $\phi$-invariant relative homology class
\begin{align*}
[d,\al,\beta] = d \tilde{H} - \sum_i \al_i \tilde{F_i}  - \sum_j \beta_j \tilde{E_j} \in H^\phi_2(X,L;\Z).
\end{align*}
\end{itemize}

We use Poincar\'e duality to define a basis for the cohomology groups $X$, and to the $\phi$-invariant cohomology groups.
Let $m=\dim H^*(X;\Q)=2+r+2s$.
Define a basis for $H^*(X;\Q)$ by taking
\begin{itemize}
\item $t_0$ Poincar\'e dual to $[X]$,
\item $t_1$ Poincar\'e dual to $[\Line]$,
\item $t_{1+i}$ Poincar\'e dual to the exceptional divisor $[\ExDiv_i]$ for  $i=1\ldots,r+2s$,
\item and $t_m$ Poincar\'e dual to a point.
\end{itemize}
Define a basis for  $H^*_{\phi}(X)$ by
\begin{itemize}
\item $\tau_0  = t_0$ Poincar\'e dual to the fundamental class,
\item $\tau_1 = \half(t_1-\conj{t_1}) = t_1$,
\item $\tau_{1+i} = \half(t_{1+i}-\conj{t_{1+i}}) = t_{1+i}$ for $ i=1,\ldots,r$,
\item $\tau_{1+r+j} = \half(t_{1+r+j}-\conj{t_{1+r+j}}) =\half(t_{1+r+j}+t_{1+r+s+j})$ for $j=1,\ldots,s$,
\item $\tau_{1+r+s+j} = \half(t_{1+r+j}+\conj{t_{1+r+j}}) =\half(t_{1+r+j}+t_{1+r+s+j})$ for $j=1,\ldots,s$,
\item and $\tau_{m} = t_{m}$ Poincar\'e dual to a point.
\end{itemize}
The $\phi$-invariant cohomology $H^*_\phi(X)$ is spanned by $ \tau_0, \ldots, \tau_{1+r+s},\tau_{m} \in H^*_\phi(X)$.

\subsection{Chern numbers and Maslov index}

\begin{lem}
The Chern numbers of the generators of $H_2(X;\Z)$ are
\begin{align*}
c_1(\Line)  = 3, \quad c_1(\ExDiv_i)  = 1,\quad i=1, \ldots, r+2s.
\end{align*}
\end{lem}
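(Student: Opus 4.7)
The statement is a standard computation for blowups of $\CP^2$, so the plan is essentially to invoke the blowup formula for the first Chern class and evaluate on the two types of generators.

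First, I would recall that if $\pi: X \to \CP^2$ denotes the blowup at the $r+2s$ points of the configuration $C$, then
\[
c_1(TX) = \pi^* c_1(T\CP^2) - \sum_{i=1}^{r+2s} [\ExDiv_i],
\]
which is the classical formula for the canonical (or anticanonical) class of a blowup of a smooth surface at a point, applied successively. Combined with $c_1(T\CP^2) = 3[H]$, where $H\subset \CP^2$ is a hyperplane, this gives $c_1(TX) = 3\pi^*[H] - \sum_i [\ExDiv_i]$.

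Next, I would record the intersection numbers in $X$: since $\pi$ is birational and the blowup centers are disjoint points, one has $\pi^*[H]\cdot \pi^*[H] = [H]\cdot [H] = 1$, $\pi^*[H]\cdot [\ExDiv_i] = 0$, and $[\ExDiv_i]\cdot[\ExDiv_j] = -\delta_{ij}$. Moreover, because $\Line$ is the strict transform of a line not passing through any point of $C$, $[\Line] = \pi^*[H]$. Then
\[
c_1(\Line) = \bigl(3\pi^*[H]-\sum_i[\ExDiv_i]\bigr)\cdot \pi^*[H] = 3\cdot 1 - 0 = 3,
\]
and
\[
c_1(\ExDiv_i) = \bigl(3\pi^*[H]-\sum_j[\ExDiv_j]\bigr)\cdot [\ExDiv_i] = 0 - (-1) = 1.
\]

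Alternatively, and perhaps more conceptually, both $\Line$ and $\ExDiv_i$ are smooth rational curves in $X$, so the adjunction formula $2g-2 = C\cdot C + K_X\cdot C = C\cdot C - c_1(TX)\cdot C$ applied with $g=0$ immediately yields $c_1(TX)\cdot C = C\cdot C + 2$; inserting the known self-intersections $[\Line]^2 = 1$ and $[\ExDiv_i]^2 = -1$ recovers the two values. There is no real obstacle here: the only thing to be slightly careful about is that $\Line$ is a genuine strict transform of a conjugation-invariant line avoiding $C$, so $[\Line] = \pi^*[H]$ and $\Line$ is smooth of self-intersection $1$; both of these are ensured by the hypotheses already stated in Section~\ref{blowup}.
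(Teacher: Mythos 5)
Your proposal is correct. In fact your ``alternative'' argument is precisely the paper's proof: the authors invoke adjunction in the form $c_1([A]) = [A]\cdot[A] + \chi(\Sigma)$ for an embedded $A:\Sigma\to X$, which for $\Sigma = S^2$ is exactly your $c_1(TX)\cdot C = C\cdot C + 2$, and then plug in $[\Line]^2 = 1$ and $[\ExDiv_i]^2 = -1$. Your primary route --- writing $c_1(TX) = 3\pi^*[H] - \sum_i [\ExDiv_i]$ via the blowup formula and pairing against $\pi^*[H]$ and $[\ExDiv_i]$ using the standard intersection matrix --- is a genuinely different, equally standard derivation. It requires you to justify the global formula for the anticanonical class of the blowup (and the identification $[\Line] = \pi^*[H]$, which you correctly tie to the line avoiding the points of $C$), whereas adjunction only needs the self-intersection and the genus of each individual curve; on the other hand, your route gives the full class $c_1(TX)$ in one shot, from which the Chern number of \emph{any} class $(d,a,b,c)$ follows immediately, which is the formula $c_1(d,a,b,c) = 3d - \abs{a} - \abs{b} - \abs{c}$ the paper records right after the lemma. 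Either way the computation is complete and there is no gap.
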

\begin{proof}
Use the adjunction formula $c_1([A])= [A]\cdot [A] + \chi(\Sigma) $ for $A: \Sigma \to X$ an embedding, where $\chi$ is the Euler characteristic.
For $A:S^2 \to X$ with image $\Line$ we get $c_1(\Line) = 3$ and for  $A:S^2 \to X$ with image $\ExDiv_i$ we get $c_1(\Line) = 1$.
\end{proof}

\begin{lem}
The Maslov indices of the generators of $H_2(X,L;\Z)$ are
\begin{align*}
\mu(\tilde{H})  = 3, \qquad \mu(\tilde{F}_i)  = 1, \quad i=1, \ldots, r, \qquad \mu(\tilde{E}_j)  = 2, \quad j=1,\ldots,s.
\end{align*}
\end{lem}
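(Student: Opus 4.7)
The plan is to compute each Maslov index by relating it, through the Schwarz doubling construction, to a Chern number computed in the previous lemma. The main tool is the standard identity $\mu(u) = c_1(\tilde u)$, where $\tilde u \colon S^2 \to X$ is the Schwarz double of a disc $u \colon (D^2, \partial D^2) \to (X,L)$, obtained by reflecting $u$ across its boundary via $\phi$. An immediate corollary is the sign identity $\mu(\phi_*[u]) = -\mu([u])$, since the Schwarz double of $\phi \circ u$ differs from $\tilde u$ by precomposition with the orientation-reversing involution $z \mapsto 1/\bar z$ of $S^2$, and therefore represents the opposite homology class in $H_2(X)$.

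Given these two ingredients and the rational linearity of $\mu$ (extended through the isomorphism $H_2^\phi(X;\Q) \cong H_2^\phi(X,L;\Q)$), the computation is case by case. For the hemisphere $u_H \colon (D^2,\partial D^2) \to (X,L)$ inscribing $H \subset \Line$, the Schwarz double is a degree one parametrization of $\Line$, so $\mu([H]) = c_1([\Line]) = 3$ by the previous lemma. The sign identity then gives $\mu([\conj H]) = -3$, and linearity yields
\[
\mu(\tilde H) \;=\; \half\bigl(\mu([H]) - \mu([\conj H])\bigr) \;=\; 3.
\]
Exactly the same argument with $F_i \subset \ExDiv_i$ in place of $H \subset \Line$ gives $\mu(\tilde F_i) = c_1(\ExDiv_i) = 1$. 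For $\tilde E_j$, the classes $[\ExDiv_{r+j}]$ and $[\ExDiv_{r+s+j}]$ are images under $H_2(X;\Z) \to H_2(X,L;\Z)$ of closed spheres disjoint from $L$. For any such pulled-back closed class one has $\mu = 2c_1$ (the relative Maslov class agrees with the pullback of $2c_1(TX)$), so each contributes $2$ and $\mu(\tilde E_j) = \half(2+2) = 2$.

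The main obstacle is keeping orientations straight. The paper's convention $\phi_*[H] = +[\conj H]$ forces $[\conj H]$ to carry the pushforward orientation from $H$, which differs from the complex orientation on $\conj H \subset \Line$ by a sign since $\phi|_\Line$ is anti-holomorphic. This is precisely what makes $\mu([\conj H]) = -\mu([H])$ and hence renders $\mu \circ \phi_* = -\mu$ compatible with the linear algebra of the $\phi_*$-invariant decomposition. Analogous care is needed for $[\conj F_i]$, and the $\mu = 2c_1$ identity for closed classes must be justified from the definition of the Maslov class on $(X,L)$.
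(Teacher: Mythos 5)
Your proof is correct and follows essentially the same route as the paper, which computes $\mu(2\tilde H)=\mu([H]-[\conj H])=\mu([\Line])=2c_1([\Line])=6$ and treats the other generators identically; your doubling identity $\mu(u)=c_1(\tilde u)$ together with $\mu\circ\phi_*=-\mu$ is just a hemisphere-by-hemisphere unpacking of the same Maslov--Chern relation.
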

\begin{proof}
Since
\begin{align*}
\mu(2\tilde{H}) & = \mu([H]-[\conj{H}]) = \mu([\Line]) = 2 c_1([\Line])= 6,
\end{align*}
we get $\mu(\tilde{H})  = 3 $.
The other Maslov indices are calculated in the same way.
\end{proof}

For a multi-index $q=(q_1,\ldots,q_k)$ we denote the sum $q_1+\cdots +q_k$ by $\abs{q}$.
The first Chern number and Maslov index are given by
\begin{align*}
c_1(d,a,b,c) & = 3d -\abs{a} -\abs{b} -\abs{c}, \\
\mu([d,\al,\beta]) & = 3d -\abs{\al} -2\abs{\beta}.
\end{align*}

\section{Gromov-Witten potentials}
In this section we use the axioms to reduce the computation of the open Gromov-Witten invariants from the computation of the multi-linear maps
\[
OGW_{[d,\al,\beta],k} : H^*_\phi(X;\Q)^{\otimes l}\to \Q
\]
to the computation of the values
\[
\Gamma_{[d,\al,\beta],k}:=OGW_{[d,\al,\beta],k}\left( \tau_{m}^{\otimes \left( \frac{3d-\abs{\al}-2\abs{\beta}-k-1}{2} \right) } \right).
\]
Specifically, we derive a formula for the open Gromov-Witten potential in terms of $\Gamma_{[d,\al,\beta],k}$. This will be useful in conjunction with Theorem~\ref{open WDVV}.
We also recall the analogous formula for the closed Gromov-Witten potential, derived in~\cite{GP}, which again will be useful in conjunction with Theorem~\ref{open WDVV}.

\subsection{The open Gromov-Witten potential}
The open Gromov-Witten potential is the formal power series in variables $T,$
\[
w=(w_{0},\ldots,w_{1+r+s},w_{m})
\]
and $u$ given by
\begin{align} \label{OpenPotetialDefinition}
\Omega(w,u) = \sum_{[d,\al,\beta],k,l} \varepsilon([d,\al,\beta]) \frac{T^{\int_{[d,\al,\beta]}\omega}u^{k}}{k! l!} OGW_{[d,\al,\beta],k}(\delta_{w}^{\otimes l}).
\end{align}
Here $\delta_{w}$ is a formal cohomology class
\begin{align*}
\delta_{w} = w_{0} \tau_{0} + \cdots + w_{1+r+s} \tau_{1+r+s} + w_{m} \tau_{m},
\end{align*}
and $\varepsilon$ is given by
\begin{align*}
\varepsilon([d,\al,\beta]) = \begin{cases} +1 & \text{ if } \mu([d,\al,\beta]) = 0 \mod 2,  \\
\sqrt{-1} & \text{ if } \mu([d,\al,\beta]) =1 \mod 2. \end{cases}
\end{align*}
Using the open Gromov-Witten axioms, we show that
\begin{align} \label{OpenPotential}
\Omega(w,u) &= w_{0} \cdot u + \\
& +\sum_{[d,\al,\beta],k} \varepsilon([d,\al,\beta]) T^{\int_{[d,\alpha,\beta]}\omega}\frac{u^{k}}{k!}  \cdot \frac{w_{m}^{l_{U}}}{l_{U}!}
\cdot e^{ \frac{d}{2} w_{1} -\sum \limits_i \frac{\al_{i}}{2}  w_{1+i} -\sum \limits_j \frac{\beta_{j}}{2}  w_{1+r+i} } \cdot \Gamma_{[d,\al,\beta],k},\notag
\end{align}
where \( l_{U}= \frac{3d-\abs{\al}-2\abs{\beta} -k -1}{2} \).
In our derivation, we separate the summands of \eqref{OpenPotetialDefinition} that depend on $w_0$, which we call the classical part, from the rest of the summands, which we call the quantum part.

Using the multi-linearity of the open Gromov-Witten invariants, the symmetry axiom and the divisor axiom, we see that the quantum part of $OGW_{[d,\al,\beta],k}(\delta_{w}^{\otimes l})$ is given by
\begin{align*}
&	\sum_{l_{1}+\cdots +l_{1+r+s} +l_{m}=l} \binom{l}{l_{1},\ldots,l_{1+r+s},l_{m} } w_{1}^{l_{1}} \cdots w_{m}^{l_{m}} \cdot OGW_{[d,\al,\beta],k}(\tau_{1}^{\otimes l_{1}} \otimes \cdots \otimes \tau_{m}^{\otimes l_{m}}) = \\
& \qquad \qquad = \sum_{l_{1}+\cdots +l_{1+r+s} +l_{m}=l}  \frac{l!}{l_{1}! \cdots l_{m}!} w_{1}^{l_{1}} \cdots w_{m}^{l_{m}} \cdot
\left(\frac{d}{2}\right)^{l_{1}} \cdot \prod_i
\left(-\frac{\al_{i}}{2}\right)^{l_{1+i}} \times \\
& \qquad \qquad\qquad \qquad \qquad \qquad \times \prod_j \left(-\frac{\beta_{j}}{2}\right)^{l_{1+r+j}} \cdot OGW_{[d,\al,\beta],k}( \tau_{m}^{\otimes l_{m}}) \\
&\qquad \qquad = l!\times\sum_{l_{1}+\cdots +l_{1+r+s} +l_{m}=l}
 \frac{(\frac{d}{2} w_{1})^{l_{1}}}{l_{1}!}  \prod_i \frac{(-\frac{\al_{i}}{2} w_{1+i})^{l_{1+i}}}{l_{1+i}!} \times \\
& \qquad\qquad\qquad \qquad \qquad \qquad \times \prod_j \frac{(-\frac{\beta_{j}}{2} w_{1+r+j})^{l_{1+r+j}}}{l_{1+r+j}!}  \cdot \frac{w_{m}^{l_{m}}}{l_{m}!} \cdot OGW_{(d,\al),k}( \tau_{m}^{\otimes l_{m}}).
\end{align*}
By the grading axiom, $OGW_{[d,\al,\beta],k}( \tau_{m}^{\otimes l})$ vanishes unless
\begin{gather*}
l \cdot \abs{\tau_{m}} + k \cdot \dim_{\R}(L) = \mu([d,\al,\beta]) + \dim_{\C}(X) +2l +k  -3 \\
\Leftrightarrow \quad  l = \frac{3d-\abs{\al}-2\abs{\beta}-k-1}{2}.
\end{gather*}
Hence setting $l_{U}= \frac{3d-\abs{\al}-2\abs{\beta} -k -1}{2},$ the quantum part of the open potential $\Omega(w,u)$ is given by
\begin{align*}
& \sum_{\substack{[d,\al,\beta],k \\ l_{1},\ldots l_{1+r+s},l_{m}}} \varepsilon([d,\al,\beta]) \frac{T^{\int_{[d,\alpha,\beta]}\omega}u^{k}}{k!}  \cdot
 \frac{(\frac{d}{2} w_{1})^{l_{1}}}{l_{1}!}  \prod_i \frac{(-\frac{\al_{i}}{2} w_{1+i})^{l_{1+i}}}{l_{1+i}!} \times \\
& \quad\qquad \qquad \qquad \qquad \times \prod_j \frac{(-\frac{\beta_{j}}{2} w_{1+r+j})^{l_{1+r+j}}}{l_{1+r+j}!}  \cdot \frac{w_{m}^{l_{m}}}{l_{m}!} \cdot OGW_{(d,\al),k}( \tau_{m}^{\otimes l_{m}}) =\\
& \quad\quad = \sum_{[d,\al,\beta],k } \sum_{l_{m}} \varepsilon([d,\al,\beta]) T^{\int_{[d,\alpha,\beta]}\omega}\frac{u^{k}}{k!}  \cdot \frac{w_{m}^{l_{m}}}{l_{m}!}
\times  \\
&\quad \qquad \qquad \qquad \qquad \qquad \times e^{ \frac{d}{2} w_{1} -\sum_i \frac{\al_{i}}{2} w_{1+i} - \sum_j \frac{\beta_{j}}{2} w_{1+r+j}  }
\cdot OGW_{[d,\al,\beta],k}( \tau_{m}^{\otimes l_{m}}) \\
& \quad\quad = \sum_{[d,\al,\beta],k } \varepsilon([d,\al,\beta]) T^{\int_{[d,\alpha,\beta]}\omega} \frac{u^{k}}{k!}  \cdot \frac{w_{m}^{l_{U}}}{l_{U}!} \cdot
e^{ \frac{d}{2} w_{1} -\sum_i \frac{\al_{i}}{2} w_{1+i} - \sum_j \frac{\beta_{j}}{2} w_{1+r+j}  }
\cdot \Gamma_{[d,\al,\beta],k}.
\end{align*}


We turn to the classical part. By the fundamental class axiom there is only one non-vanishing open Gromov-Witten invariant involving $\tau_0,$ namely \[
OGW_{[0,0,0],1}(\tau_{0}) = 1.
\]
So the classical part is
\begin{align*}
\varepsilon([0,0,0]) \frac{T^0u^{1}}{1!0!} OGW_{[0,0,0],1}(w_{0} \tau_{0}) = (+1) \cdot u \cdot w_{0} \cdot OGW_{[0,0,0],1}( \tau_{0}) = w_{0} u.
\end{align*}
Combining the classical and quantum part we obtain expression~\eqref{OpenPotential}.

\subsection{The closed Gromov-Witten potential}

The closed Gromov-Witten potential is the formal power series
\begin{align*}
\Phi(w) = \sum_{(d,a,b,c),n} \frac{T^{\int_{(d,a,b,c)}\omega}}{n!} GW_{(d,a,b,c)}(\delta_{w}^{\otimes n})
\end{align*}
in the formal variables $T$ and \( w = (w_{0},\ldots,w_{m}) \).
We combine these formal variables to a formal cohomology class \( \delta_{w} = w_{0} t_{0} + \cdots + w_{m} t_{m} \).

Using the closed Gromov-Witten axioms, a formula for the closed Gromov-Witten potential can be derived like the formula for the open Gromov-Witten potential. Namely,
\begin{align}\label{eq:cgwp}
\Phi(w) =&  \half \left\{  w_{0}^{2} w_{m}  + w_{0}w_{1}^{2} -\sum_{i=1}^{r+2s} w_{0}w_{1+i}^{2}  \right\} + \sum_{(d,a,b,c)} T^{\int_{(d,a,b,c)}\omega} \frac{w_{m}^{l_{F}}}{l_{F}!} \times\\
& \qquad \qquad \qquad \times e^{ d w_{1} -\sum_{i=1}^{r} a_{i} w_{1+i}  -\sum_{j=1}^{s} b_{j} w_{1+r+j}  -\sum_{j=1}^{s} c_{j} w_{1+r+s+j} }
\cdot  N_{(d,a,b,c)},\notag
\end{align}
where $ l_{F} = 3d -\abs{a} -\abs{b}-\abs{c} -1$, and
\begin{align*}
N_{(d,a,b,c)} = GW_{(d,a,b,c)}(t_{m}^{\otimes l_F}).
\end{align*}
See~\cite{GP} for more details.

\section{\texorpdfstring{Empty moduli spaces and vanishing open \\ Gromov-Witten invariants}{Empty moduli spaces and vanishing open Gromov-Witten invariants} }
\label{empty moduli}
In this section we show that many of the open Gromov-Witten invariants of $X = \C P^2_{r,s}$ vanish.
More specifically, we show that for many homology classes $[d,\al,\beta],$ the moduli space $\M^D_{k,l}(X/L,[d,\al,\beta])$ is empty for generic $J$ by the positivity of intersections of $J$-holomorphic curves. See~\cite{MS} Theorem E.1.5 for reference.
Also, we use the open grading axiom to show that for $d=0,1,$ many of the open invariants \(\Gamma_{[d,\al,\beta],k} \) vanish. We denote by $[q]$ the multi-index with $1$ for the $q^{th}$ entry and $0$ everywhere else.

\begin{lem} \label{lem:positive intersection consequences}
For generic $J,$ the moduli space  \( \M^D_{k,l}(X/L,[d,\al,\beta]) \) is empty if one or more of the following conditions hold:
\begin{enumerate}
\item \( d<0 \),
\item \( \al_q > d \) for some \( q=1,\ldots,r \),
\item \( 2\beta_q > d \) for some \(  q=1,\ldots, s \) and \( [d,\al,\beta] \neq [1,0,[q]] \),
\item \( \al_q < 0 \) for some \( q = 1,\ldots,r \)  and \( [d,\al,\beta] \neq [0,-[q],0] \),
\item \( \beta_q < 0 \) for some \( q = 1,\ldots,s \).
\end{enumerate}
\end{lem}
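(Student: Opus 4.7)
The plan is to apply positivity of intersections for $J$-holomorphic curves in dimension four (\cite{MS} Theorem E.1.5) after \emph{doubling} discs to spheres. Since $\phi^*J = -J$, any disc $u \in \M^D_{k,l}(X/L,[d,\al,\beta])$ admits a $\phi$-equivariant double $\tilde u : S^2 \to X$ obtained by gluing $u$ to $\phi\circ u \circ c$ along the boundary, where $c$ is complex conjugation on $D^2$. The map $\tilde u$ is a $J$-holomorphic sphere, and a direct computation using the formulas for the $\phi_*$-action on $H_2(X;\Z)$ and the definition of $[d,\al,\beta]$ shows
\begin{equation*}
[\tilde u] = d[\Line] - \sum_{i=1}^{r} \al_i [\ExDiv_i] - \sum_{j=1}^{s} \beta_j\bigl([\ExDiv_{r+j}] + [\ExDiv_{r+s+j}]\bigr) \in H_2(X;\Z).
\end{equation*}
In each of the five cases the strategy is to exhibit a $J$-holomorphic test curve $C$ for which the hypothesis forces $[\tilde u]\cdot[C] < 0$, and then conclude via positivity that $\tilde u$ and $C$ share an irreducible component; the combinatorics of the intersection numbers then yields either a direct contradiction or the listed exception class.

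The test curves and their intersection numbers with $[\tilde u]$ are as follows. Each exceptional divisor $\ExDiv_i$ is $J$-holomorphic for any $\omega$-tame $J$, being the unique $(-1)$-curve in its class; it gives intersection $\al_q$ with $\tilde u$ for $q\leq r$ and $\beta_q$ with $\ExDiv_{r+q}$ or $\ExDiv_{r+s+q}$. The strict transform $\Line$ gives intersection $d$. The strict transform $L^{(q)}$ of a line through the real blowup point $x_q$, of class $[\Line]-[\ExDiv_q]$, gives $d-\al_q$. The strict transform $L_{q\bar q}$ of the unique conjugation-invariant line through the conjugate pair $y_q,\bar y_q$, of class $[\Line]-[\ExDiv_{r+q}]-[\ExDiv_{r+s+q}]$, gives $d-2\beta_q$. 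For generic $\phi$-invariant $J$ in an open neighborhood of the integrable structure induced by the blowup, all of these persist as $J$-holomorphic curves by cobordism invariance of the moduli spaces of positive classes of low degree.

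Case by case: (1) intersect with $\Line$ to get $d<0$, forcing $\tilde u$ to coincide with a multiple of $\Line$, which contradicts the $[\ExDiv_i]$- or $[\ExDiv_{r+j}]$-coefficients as soon as any are nonzero (the purely $d[\Line]$ case is ruled out by positivity of the self-intersection of $\Line$). (2) intersect with $L^{(q)}$; split off a copy of $L^{(q)}$ and iterate, the inequality $\al_q > d$ being preserved by each subtraction. (3) intersect with $L_{q\bar q}$; iteration terminates either in a contradiction or exactly at $[\tilde u]=[L_{q\bar q}]$, i.e.\ at the excluded exception $[d,\al,\beta]=[1,0,[q]]$. (4) intersect with $\ExDiv_q$, which is $\phi$-invariant so can be removed without disturbing $\phi$-equivariance of $\tilde u$; iteration terminates either at the excluded exception $[0,-[q],0]$ (realized by the hemisphere $F_q$) or in a contradiction. (5) intersect with $\ExDiv_{r+q}$; since neither $\ExDiv_{r+q}$ nor $\ExDiv_{r+s+q}$ is $\phi$-invariant, $\phi$-invariance of $\tilde u$ forces the conjugate divisor to appear in tandem with each removal, and because neither meets $L$ there is no hemisphere disc accounting for the residual class, so no exception arises.

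The main technical obstacle is justifying that the various test curves are simultaneously $J$-holomorphic for the same generic $\phi$-invariant $J$ that makes $\M^D_{k,l}$ well-behaved, and carefully tracking the iteration while preserving $\phi$-invariance of the residual class. Both are handled by standard genericity and regularity in dimension four: $(-1)$-curves deform uniquely and persist, low-degree primitive classes admit regular $J$-holomorphic representatives in any open neighborhood of the integrable $J_0$, and each subtraction step either decreases the bad parameter strictly toward zero or produces the explicit exception listed in the statement.
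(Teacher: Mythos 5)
Your overall strategy coincides with the paper's: double the disc to a $J$-holomorphic sphere $\tilde u$ in the class $d[\Line]-\sum_i\al_i[\ExDiv_i]-\sum_j\beta_j([\ExDiv_{r+j}]+[\ExDiv_{r+s+j}])$, produce a $J$-holomorphic test curve in each case whose homological intersection with $[\tilde u]$ is negative, and invoke positivity of intersections. You even choose the same five test classes. However, two steps in your execution are off. First, the ``split off a copy and iterate'' mechanism is ill-posed: $\tilde u$ is a single parametrized sphere, so it has no components to remove. What positivity of intersections actually gives you when $[\tilde u]\cdot[C]<0$ is that the images of $\tilde u$ and $C$ coincide (they agree on an open set, by \cite{MS} Lemma 2.4.3), hence $[\tilde u]=m[C]$ for some $m\geq 1$ (positive because $J$-holomorphic curves have positive area). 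This kills each case in one step --- e.g.\ in case (2), $[\tilde u]=m([\Line]-[\ExDiv_q])$ forces $\al_q=d=m$, contradicting $\al_q>d$ --- and it is also exactly where the exceptions $[1,0,[q]]$ and $[0,-[q],0]$ enter, as the cases where $[\tilde u]=[C]$ cannot be excluded. Your iteration, with its talk of ``residual classes'' and preserving $\phi$-invariance of the remainder, is solving a problem that does not arise.

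Second, and more seriously, your justification for the existence of the test curves --- persistence ``in an open neighborhood of the integrable structure induced by the blowup'' by cobordism invariance --- does not prove the lemma as stated, which asserts emptiness of $\M^D_{k,l}(X/L,[d,\al,\beta])$ for \emph{generic} $J$, not for generic $J$ near $J_0$. This is precisely the ``main technical obstacle'' you flag, and the paper resolves it differently: for the three line-type classes $\zeta=[\Line]$, $[\Line]-[\ExDiv_q]$, $[\Line]-[\ExDiv_{r+q}]-[\ExDiv_{r+s+q}]$ it observes that the Welschinger invariant $W_{\zeta,0}$ equals $1$, so a real rational $J$-holomorphic representative exists for \emph{every} generic $\phi$-invariant $J$; for the exceptional classes it uses the fact (\cite{MS} Example 7.1.15) that they admit stable $J$-holomorphic representatives for any $\omega$-tame $J$, arranged to be irreducible by choosing $\omega$ so that $\int_\zeta\omega$ is the minimal positive value of $\omega$ on $H_2(X;\Z)$. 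Replacing your neighborhood argument with these invariant-theoretic existence statements is the missing idea; without it the proof only rules out the moduli space being nonempty for $J$ close to integrable.
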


\begin{proof}
Assume \( \M^D_{k,l}(X/L,[d,\al,\beta]) \neq \emptyset\).
Then there exists a $J$-holomorphic disc \( f:(D^2, \partial D^2) \to (X,L) \) with the projection of \(f_*([D^2, \partial D^2]) \in H_2(X,L) \) to \( H^{\phi}_2(X,L) \) equal to $[d,\al,\beta]$.
Doubling $f$ we get a $J$-holomorphic curve \( u: \C P^1 \to X \) of degree
\begin{align*}
[u] = d\cdot \Line -\sum \al_i \ExDiv_i - \sum \beta_j (\ExDiv_{r+j}+\ExDiv_{r+s+j}) \in H_2(X).
\end{align*}

For each case of the lemma, we show there exists a $J$-holomorphic curve $c$ with \[
[c] \cdot [u] <0
\]
and \( [c] \neq [u] \).
By~\cite{MS} Lemma 2.4.3, the curves $c,u,$ do not agree on any open set.
This contradicts the positivity of intersections, so \( \M^D_{k,l}(X/L,[d,\al,\beta]) = \emptyset\).

Denote by $W_{\zeta,l}$ the Welschinger invariant counting real rational $J$-holo\-morphic curves of degree $\zeta$ on $X$ passing through $l$ pairs of conjugate points and $k=c_1(\zeta) -2l -1$ real points. See~\cite{Welschinger}. To treat a given case of the lemma, it suffices to construct a homology class $\zeta$ such that
\[
W_{\zeta,0} \neq 0, \qquad \zeta \cdot [u] < 0, \qquad [\zeta] \neq [u].
\]
We apply this strategy for the first three cases of the lemma.
\begin{enumerate}
\item Take $\zeta=[\Line]$.
Then $W_{\zeta,0} = 1$ and
\begin{align*}
\zeta \cdot [u] &= [\Line] \cdot \left( d[\Line] -\sum_i \al_i [\ExDiv_i] -\sum_j \beta_j (\ExDiv_{r+j}+\ExDiv_{r+s+j}) \right) \\
&= d [\Line] \cdot [\Line] =d <0.
\end{align*}

\item Take $\zeta = [\Line] - [\ExDiv_q].$
Then $W_{\zeta,0} = 1$ and
\begin{align*}
 \zeta\cdot [u] &= \left( [\Line] - [\ExDiv_q] \right) \cdot \left( d[\Line] -\sum_i \al_i [\ExDiv_i] -\sum_j \beta_j (\ExDiv_{r+j}+\ExDiv_{r+s+j}) \right)  \\
& = d -\al_q <0.
\end{align*}
\item Take $\zeta = [\Line] - [\ExDiv_{r+q}]-[\ExDiv_{r+s+q}].$
Then $W_{\zeta,0} = 1$ and
\begin{align*}
 \zeta\cdot [u] & = \left( [\Line] - [\ExDiv_{r+q}]-[\ExDiv_{r+s+q}] \right) \cdot \\
 & \qquad \cdot \left( d[\Line] -\sum_i \al_i [\ExDiv_i] -\sum_j \beta_j (\ExDiv_{r+j}+\ExDiv_{r+s+j}) \right) \\
                   & = d -2\beta_q <0.
\end{align*}
\end{enumerate}
For the remaining cases, we use the fact that if $\zeta$ is the homology class of an exceptional divisor, it admits a possibly reducible stable $J$-holomorphic representative for any $\omega$-tame $J.$ See Example 7.1.15 in~\cite{MS}. By taking $\omega$ such that $\int_\zeta \omega$ is the minimal positive value of $\omega$ on $H_2(X,\Z),$ we can make sure the $J$-holomorphic representative is irreducible. So, it suffices to find $\zeta$ the homology class of an exceptional divisor with $\zeta \cdot [u] < 0$ and $\zeta \neq [u].$
\begin{enumerate}[resume]
\item Take $\zeta=[\ExDiv_q]$.
Then
\begin{align*}
  \zeta \cdot [u] & = [\ExDiv_q] \cdot \left( d[\Line] -\sum_i \al_i [\ExDiv_i] -\sum_j \beta_j (\ExDiv_{r+j}+\ExDiv_{r+s+j}) \right)  \\
                    & = -\al_q [\ExDiv_q] \cdot [\ExDiv_q] = \al_q <0.
\end{align*}
\item Take $\zeta=[\ExDiv_{r+q}]$.
Then
\begin{align*}
  \zeta\cdot [u] & = [\ExDiv_{r+q}] \cdot \left( d[\Line] -\sum_i \al_i [\ExDiv_i] -\sum_j \beta_j (\ExDiv_{r+j}+\ExDiv_{r+s+j}) \right)  \\
                    & = -\beta_q [\ExDiv_{r+q}] \cdot [\ExDiv_{r+q}] = \beta_q <0.
\end{align*}
\qedhere
\end{enumerate}
\end{proof}

By the definition of $\Gamma_{[d,\al,\beta],k} = OGW_{[d,\al,\beta],k}(\tau_m^{\otimes l})$, we have proved the following corollary.
\begin{cor} \label{cor:positive intersection consequences}
If $\Gamma_{[d,\al,\beta],k} \neq 0 $ then $d\geq 0$ and $ 0 \leq \al_i,\beta_j \leq d$ for all $i=1,\ldots,r,$ and $j=1,\ldots,s$, except, perhaps,  for $[d,\al,\beta]=[0,-[i],0]$.
\end{cor}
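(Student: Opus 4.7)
The plan is to deduce this corollary as an immediate contrapositive consequence of Lemma \ref{lem:positive intersection consequences}, using only the definition of $\Gamma_{[d,\al,\beta],k}$ as a signed count of points in the cut-down fiber product $P$. Setting $l_U = (3d-\abs{\al}-2\abs{\beta}-k-1)/2$, one has $\Gamma_{[d,\al,\beta],k} = OGW_{[d,\al,\beta],k}(\tau_m^{\otimes l_U})$, computed via $P$ obtained by pulling back the evaluation map $ev$ on $\M^D_{k,l_U}(X/L,[d,\al,\beta])$ against $k$ points in $L$ and $l_U$ representatives of $\tau_m$. If the moduli space itself is empty for generic $\phi$-invariant $J$, then so is $P$, so the invariance theorem quoted just before the definition of $\Gamma$ forces $\Gamma_{[d,\al,\beta],k} = 0$.

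Taking the contrapositive, if $\Gamma_{[d,\al,\beta],k}\neq 0$ then $\M^D_{k,l_U}(X/L,[d,\al,\beta])$ is nonempty for generic $J$, and I would negate each of the five conditions of Lemma \ref{lem:positive intersection consequences} in turn. Negating (1) and (5) gives $d\geq 0$ and $\beta_j\geq 0$ for all $j$. Negating (4) gives $\al_i \geq 0$ unless $[d,\al,\beta] = [0,-[i],0]$, which is exactly the permitted exception in the statement. Negating (2) yields $\al_i \leq d$ for all $i$. Finally, negating (3) gives either $2\beta_j \leq d$ for every $j$, or $[d,\al,\beta]=[1,0,[j]]$ for some $j$; in the first case $\beta_j \leq d/2 \leq d$, and in the second case $\beta_j = 1 = d$. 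Hence $\beta_j \leq d$ holds in both branches, so no further exception to $\beta_j \leq d$ is needed in the statement.

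There is no genuine obstacle: the corollary is a bookkeeping rewrite of Lemma \ref{lem:positive intersection consequences} in terms of non-vanishing of $\Gamma$. The only point deserving a brief check is that the exceptional subcase $[1,0,[j]]$ of condition (3), which is \emph{not} listed as an exception in the corollary, is in fact absorbed by the uniform bound $\beta_j \leq d$, as noted above. Everything else is direct.
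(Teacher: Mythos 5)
Your proposal is correct and matches the paper's treatment: the authors state the corollary as an immediate consequence of Lemma \ref{lem:positive intersection consequences} via the definition of $\Gamma_{[d,\al,\beta],k}$ as a count over the (empty) moduli space, which is exactly your contrapositive argument. Your additional check that the exceptional class $[1,0,[j]]$ from condition (3) still satisfies $\beta_j = 1 = d$ and so needs no separate exception is the one non-trivial bookkeeping point, and you handled it correctly.
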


We now show that for $d=0,1$ there are only few possible values of \( \al,\beta, k \) for which the open Gromov-Witten invariants do not vanish.
\begin{lem} \label{lem:OGW_of_deg=1}
If \( \Gamma_{[1,\al,\beta],k} \neq 0 \) then one of the following holds:
\begin{enumerate}
\item \( \al =0, \beta=0, k=2. \)
\item \( \al=0, \beta=0, k=0. \)
\item \( \al=[i], \beta=0, k=1, \) for some \( i = 1,\ldots,r \).
\item \( \al=[i]+[j], \beta=0, k=0, \) for some \( i\neq j \).
\item \( \al=0, \beta=[j], \) for some \( j=1,\ldots, s \).
\end{enumerate}
\end{lem}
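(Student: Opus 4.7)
The plan is to combine the geometric vanishing from Lemma~\ref{lem:positive intersection consequences} (and its corollary) with the dimensional constraint coming from the grading axiom, and then simply enumerate the finite list of possibilities that survive.

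First I would extract the constraints on $\al$ and $\beta$. Since $d=1>0$, Corollary~\ref{cor:positive intersection consequences} gives $\al_i\ge 0$ and $\beta_j\ge 0$ for all $i,j$. Lemma~\ref{lem:positive intersection consequences}(2) bounds $\al_i\le d=1$, so each $\al_i$ is either $0$ or $1$. Lemma~\ref{lem:positive intersection consequences}(3) says that $2\beta_q\le 1$, i.e.\ $\beta_q=0$, unless the class is exactly $[1,0,[q]]$; hence either $\beta=0$ or $\beta=[j]$ with $\al=0$ for some $j\in\{1,\dots,s\}$, the latter giving case~(5).

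Next I would invoke the grading axiom. By definition $\Gamma_{[1,\al,\beta],k}=OGW_{[1,\al,\beta],k}(\tau_m^{\otimes l_U})$ with
\[
l_U=\frac{3d-|\al|-2|\beta|-k-1}{2}=\frac{2-|\al|-2|\beta|-k}{2},
\]
which must be a nonnegative integer. Equivalently, $|\al|+2|\beta|+k\le 2$ and $|\al|+k$ is even (this is the same parity condition required by Theorem~\ref{relative orientation}). Together with the previous paragraph's bound $\al_i\in\{0,1\}$, this reduces the verification to an easy finite enumeration.

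Finally I would enumerate. Assume $\beta=0$ (the case $|\beta|\ge 1$ was already resolved above as case~(5), forcing $k=0$ from $2|\beta|+k\le 2$ and $k$ even). Then $|\al|+k\le 2$ with $|\al|+k$ even, so $|\al|+k\in\{0,2\}$. If $|\al|+k=0$, both vanish, giving case~(2). If $|\al|+k=2$, the subcases are $(|\al|,k)=(0,2),(1,1),(2,0)$, matching cases~(1), (3) and (4) respectively; in case~(4) the fact that each $\al_i\in\{0,1\}$ forces $\al=[i]+[j]$ with $i\ne j$. This exhausts all possibilities.

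The argument involves no real obstacle; the only thing to be careful about is keeping track of the parity constraint on $|\al|+k$ and of the exceptional clause $[1,0,[q]]$ in Lemma~\ref{lem:positive intersection consequences}(3), which is precisely what produces case~(5).
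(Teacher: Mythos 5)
Your proof is correct and follows essentially the same route as the paper: restrict $\al_i,\beta_j$ to $\{0,1\}$ via the positivity-of-intersections results, impose the grading constraint $2l+k=2-|\al|-2|\beta|$ with $l,k\ge 0$, and enumerate the finitely many surviving cases. The only cosmetic difference is that you invoke Lemma~\ref{lem:positive intersection consequences}(3) with its exceptional clause to pin down $\beta$, whereas the paper gets the same conclusion from the bound $\beta_j\le d$ together with the grading identity.
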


\begin{proof}
Since \( \Gamma_{[1,\al,\beta],k} \) does not vanish, by Corollary~\ref{cor:positive intersection consequences} we have \( \al_i = 0,1, \beta_j =0,1, \) for all $i,j$.
By the grading axiom we have
\[
l = \frac{3d -\abs{\al}-2\abs{\beta}-k-1}{2} \Leftrightarrow 2l+k = 2 - \abs{\al} -2\abs{\beta}.
\]
Since  $k,l,\abs{\al},\abs{\beta}$, are all positive integers, one of the specified conditions must hold.
\end{proof}

\begin{lem} \label{lem:OGW_of_deg=0}
If \( \Gamma_{[0,\al,\beta],k} \neq 0, \) then \( \al=-[i], \beta=0, k=0, \) for some $i=1,\ldots,r$.
\end{lem}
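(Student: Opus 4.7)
The plan is to deduce this from Corollary~\ref{cor:positive intersection consequences} together with the grading axiom, exactly as in the proof of Lemma~\ref{lem:OGW_of_deg=1}. Since $d=0$, the corollary forces $0 \leq \al_i \leq 0$ and $0 \leq \beta_j \leq 0$ for all $i,j$, except perhaps in the one excluded case $[d,\al,\beta] = [0,-[i],0]$. Hence there are only two families of candidates to consider: the all-zero case $(\al,\beta)=(0,0)$ and the exceptional case $(\al,\beta) = (-[i], 0)$ for some $i \in \{1,\ldots,r\}$.

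Next I would rule out $(\al,\beta)=(0,0)$ using the grading axiom. Writing $l$ for the number of $\tau_m$ insertions in $\Gamma_{[0,0,0],k}$, the grading axiom yields
\[
2l + k = 3d - \abs{\al} - 2\abs{\beta} - 1 = -1,
\]
which has no solution with $k,l \geq 0$. Hence $\Gamma_{[0,0,0],k}$ vanishes for every $k$.

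Finally, in the surviving case $(\al,\beta)=(-[i],0)$, the grading axiom reads
\[
2l + k = 3\cdot 0 - (-1) - 0 - 1 = 0,
\]
and since $k,l$ are nonnegative integers this forces $k=0$ and $l=0$. Combining these observations, the only homology class and boundary marking for which $\Gamma_{[0,\al,\beta],k}$ can be nonzero is $\al=-[i]$, $\beta=0$, $k=0$, which is exactly the conclusion. There is no serious obstacle here; the argument is a direct case analysis using results already established in the section.
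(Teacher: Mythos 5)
Your proof is correct and uses the same two ingredients as the paper's own argument, namely the vanishing results coming from positivity of intersections (Corollary~\ref{cor:positive intersection consequences}) and the grading axiom; the only difference is cosmetic, in that you split into the two candidate cases up front while the paper chains the deductions through Lemma~\ref{lem:positive intersection consequences} directly. No gaps.
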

\begin{proof}
Since \( \Gamma_{[0,\al,\beta],k} \neq 0,\) the corresponding moduli space is non-empty,  hence \( \al_i, \beta_j \leq 0 \) for all \(i,j \) by Lemma~\ref{lem:positive intersection consequences}.
In addition,
\[
l = \frac{3d -\abs{\al}-2\abs{\beta}-k-1}{2} \Leftrightarrow  2l+k= - \abs{\al} -2\abs{\beta}-1
\]
by the grading axiom. Therefore,  \(\abs{\al} \) must be negative. But Lemma~\ref{lem:positive intersection consequences} implies this is only possible if \([d,\al,\beta] = [0,-[i],0] \).
Turning our attention back to the grading axiom, we have \( 2l+k=0\) for $k,l,$ non negative integers. So \( k,l=0 \).
\end{proof}

\section{Application of open WDVV equations}
\label{WDVV equations}
We now apply the open WDVV equations to formulae~\eqref{OpenPotential} and~\eqref{eq:cgwp} for the Gromov-Witten potentials to derive explicit relations between the open and closed Gromov-Witten invariants.

As the WDVV equations use the matrix \( (g^{ij}) \), we note that for $X=\CP^2_{r,s}$ the inverse intersection matrix of the invariant homology basis
\begin{align*}
 [pt], \tilde{H}, \tilde{F}_1, \ldots, \tilde{F}_r, \tilde{E}_1, \ldots, \tilde{E}_s, [X],
 \end{align*}
 is given by
\begin{align*}
 (g^{ij}) =
 \begin{pmatrix}
0 & & &  &    &    &  &    & 1 \\
   & 1 & 0 & \cdots & 0 &    \\
   & 0 & -1 &          & 0 &    \\
   & \vdots &   & \ddots & \vdots \\
   & 0 & 0 & \cdots & -1 & \\
   & & & & & -2 &    \cdots      & 0 &    \\
   & & & & &  \vdots & \ddots & \vdots \\
   & & & & & 0 & \cdots & -2 & \\
1 & & & &  &  &  &  &0 \\
\end{pmatrix}.
\end{align*}


 \subsection{Results of open WDVV}
 In this section we apply Theorem~\ref{open WDVV} with specific choices of $a,b,c$, and extract relations between $N_{(d,\al,\beta,\gamma)}, \Gamma_{[d,\al,\beta],k},$ by examining the coefficients of the relevant power series.

In the following equations we use several shorthand notations. Given $[d,\al,\beta],k,$ we write $l = \frac{1}{2}(\mu([d,\al,\beta]) - k - 1).$ We denote by $\vdash[d,\al,\beta],k$ the the set of all integers \( d',k',l', d'', k'',l'', \) and integral multi-indices
 \begin{align*}
 \al'=(\al'_{1},\ldots, \al'_{r}), \quad \al''=(\al''_{1}, \ldots, \al''_{r}), \quad \beta'=(\beta'_{1},\ldots, \beta'_{s}), \quad \beta''=(\beta''_{1}, \ldots, \beta''_{s}),
\end{align*}  satisfying
\begin{align*}
& d'+d''=d, \quad \al'+\al''= \al, \quad \beta'+\beta''=\beta, \quad 0 \leq d',d'',k',k'',l',l'', \\
& -1 \leq \al'_{i} \leq d',\quad  -1 \leq \al''_{i} \leq d'', \quad 0 \leq 2\beta'_{j} \leq d'+1,\quad  0 \leq 2\beta''_{j} \leq d''+1, \\
&  l' = \half(\mu([d',\al',\beta'])-k'-1), \quad l''=\half(\mu([d'',\al'',\beta''])-k''-1).
\end{align*}
We take multinomial coefficients to be zero if there are non-integer or negative indices.

For $d\in \Z$, and multi-indices
\[
\al=(\al_1,\ldots,\al_r), \quad \al_i \in \Z, \qquad \beta=(\beta_1,\ldots,\beta_s),  \quad \beta_{j} \in \half \Z,
\]
we define
\begin{align*}
\tilde{N}_{[d,\al,\beta]} = \sum_{c} N_{(d,\al,\beta+c,\beta-c)},
\end{align*}
where we sum over all half integral multi-indices \( c=(c_1,\ldots, c_s), c_j \in \half \Z \).
Note that this sum is finite because $N_{(d,\al,\beta+c,\beta-c)}$ vanishes unless
\[
-1 \leq \beta_j+c_j,\beta_j-c_j \leq d,\qquad \forall j=1,\ldots, s, \]
by~\cite{GP}. We extend the definition of $\tilde N_{[d,\alpha,\beta]}$ to the case that $d$ and $\alpha$ are not integral by setting it equal to zero.

Similarly, the notation $\models[d,\al,\beta]$ denotes the set of integers \( d_{F}, l_{F}, d_{U}, l_{U} \), integral multi-indices
\begin{align*}
\al_{F}=((\al_{F})_{1}\ldots (\al_{F})_{r}), \quad  \al_{U}=((\al_{U})_{1}\ldots (\al_{U})_{r}), \quad \beta_{U}=((\beta_{U})_{1}\ldots (\beta_{U})_{s})
\end{align*}
and half integral multi-indices
\( \beta_{F}=((\beta_{F})_{1}\ldots (\beta_{F})_{s}) \)
satisfying
\begin{gather*} d_{F}+ \frac{d_{U}}{2} = \frac{d}{2},\qquad \al_{F}+ \frac{\al_{U}}{2} = \frac{\al}{2}, \qquad \beta_{F}+ \frac{\beta_{U}}{2} = \frac{\beta}{2}, \\
d_{F}, l_{F}, d_{U}, l_{U} \geq 0, \\
l_{F}=c_{1}(d_{F},\al_{F}, \beta_{F}, 0)-1, \qquad l_{U} = \half(\mu([d_{U},\al_{U},\beta_{U}]) -k-1).
\end{gather*}
Finally, we write
\begin{align*}
\al_{F} \bullet \al_{U} = \sum_{i=1}^{r} (\al_{F})_{i} \cdot (\al_{U})_{i}, \quad \beta_{F} \bullet \beta_{U} = \sum_{j=1}^{s} (\beta_{F})_{j} \cdot (\beta_{U})_{j}.
\end{align*}

\subsubsection*{Theorem \ref{open WDVV}(1) with a=b=1, c=m}
Applying Theorem \ref{open WDVV}(1) with $a=b=1, c=m,$ for $l\geq 2$ we get
\begin{align*} \label{OGW1} \tag{OGW1}
\Gamma_{[d,\al,\beta],k}
&  = \sum_{\vdash[d,\al,\beta],k} \frac{\varepsilon([d',\al',\beta']) \varepsilon([d'',\al'',\beta''])}{\varepsilon([d,\al,\beta])} \binom{k}{k', k''-1} \times \\
& \qquad \qquad \times                          \frac{d'}{2} \left[ \binom{l-2}{l'-1, l''} \frac{d''}{2} - \binom{l-2}{l',l''-1} \frac{d'}{2} \right]
										\Gamma_{[d',\al',\beta'],k'} \Gamma_{[d'',\al'',\beta''],k''} +\\
& \quad + \sum_{\models[d,\al,\beta]} \frac{\varepsilon([d_{U},\al_{U},\beta_{U}])}{\varepsilon([d,\al,\beta])}
										d_{F} \left( d_{F} \frac{d_{U}}{2} - \half \al_{F} \bullet \al_{U} -\beta_{F} \bullet \beta_{U} \right) \times \\
& \qquad \qquad  \quad \times \left[ \binom{l-2}{ l_{F}-1, l_{U}} \frac{d_{U}}{2} - \binom{l-2}{l_{F},l_{U}-1} d_{F} \right]
									               \tilde{N}_{[d_{F}, \al_{F},\beta_{F}]} \Gamma_{[d_{U},\al_{U},\beta_{U}],k}.
\end{align*}

\subsubsection*{Theorem \ref{open WDVV}(2) with a=b=1}
Applying Theorem \ref{open WDVV}(2) with $a=b=1$, for $l\geq 1, k \geq 1,$ we get
\begin{align*} \label{OGW2} \tag{OGW2}
\Gamma_{[d,\al,\beta],k}
& = \sum_{\vdash[d,\al,\beta],k} \frac{\varepsilon([d',\al',\beta']) \varepsilon([d'',\al'',\beta''])}{\varepsilon([d,\al,\beta])} \binom{l-1}{l',l''} \times \\
& \quad \quad \times  \frac{d'}{2}  \left[ \binom{k-1}{k'-1, k''-1} \frac{d''}{2} - \binom{k-1}{k',k''-2} \frac{d'}{2} \right]
										\Gamma_{[d',\al',\beta'],k'} \Gamma_{[d'',\al'',\beta''],k''}  - \\
& - \sum_{\models[d,\al,\beta]} \frac{\varepsilon([d_{U},\al_{U},\beta_{U}])}{\varepsilon([d,\al,\beta])}
										\binom{l-1}{ l_{F}, l_{U} } \times \\
& \qquad \qquad \times  d_{F}^{2} \left( d_{F} \frac{d_{U}}{2} - \half \al_{F} \bullet \al_{U} -\beta_{F} \bullet \beta_{U} \right)
										\tilde{N}_{[d_{F}, \al_{F},\beta_{F}]} \Gamma_{[d_{U},\al_{U},\beta_{U}],k} - \\
& -\delta_{k,1} \frac{d^{2}}{4} \tilde{N}_{[\frac{d}{2},\frac{\al}{2},\frac{\beta}{2}]}.
\end{align*}

\subsubsection*{Theorem \ref{open WDVV}(1) with a=b=1, c=1+i for i=1,...,r}
Fixing $i=1,...,r,$ we apply Theorem \ref{open WDVV}(1) with $a=b=1,c=1+i$.
For $ l \geq 1$ we get
\begin{align*} \label{OGW3a} \tag{OGW3.a[i]}
 (\frac{-\al_{i}}{2}) \cdot \Gamma_{[d,\al,\beta],k}	
& = \sum_{\vdash[d,\al,\beta],k} \frac{\varepsilon([d',\al',\beta']) \varepsilon([d'',\al'',\beta''])}{\varepsilon([d,\al,\beta])}
  																			 \binom{k}{k', k''-1} \binom{l-1}{l',l''} \times \\
& \qquad \qquad \times                                    \frac{d'}{2} \left[ - \frac{ d''\al'_{i} }{4} + \frac{ d' \al''_{i} }{4} \right]
																			 \Gamma_{[d',\al',\beta'],k'} \Gamma_{[d'',\al'',\beta''],k''} + \\
& \quad + \sum_{\models[d,\al,\beta]} \frac{\varepsilon([d_{U},\al_{U},\beta_{U}])}{\varepsilon([d,\al,\beta])}
																			 \binom{l-1}{l_{F}, l_{U}} d_{F} \left[ (\al_{F})_{i} \frac{d_{U}}{2} - d_{F} \frac{(\al_{U})_{i}}{2} \right] \times \\
& \quad \quad \quad \times
																			 \left( - d_{F} \frac{d_{U}}{2} + \half \al_{F} \bullet \al_{U} + \beta_F \bullet \beta_{U} \right)
																			 \tilde{N}_{[d_{F}, \al_{F},\beta_{F}]} \Gamma_{[d_{U},\al_{U},\beta_{U}],k}.
\end{align*}
In our recursive algorithm, we apply this relation for $i$ with $\al_i \neq 0$ in order to compute $\Gamma_{[d,\al,\beta],k}$.

\subsubsection*{Theorem \ref{open WDVV}(1) with a=b=1, c=1+r+j for j=1,...,s}
Fixing $j=1,...,s$ we apply Theorem \ref{open WDVV}(1) with $a=b=1,c=1+r+j$.
For $ l \geq 1$ we get
\begin{align*} \label{OGW3b} \tag{OGW3.b[j]}
(\frac{-\beta_{j}}{2}) \cdot \Gamma_{[d,\al,\beta],k}
& =  \sum_{\vdash[d,\al,\beta],k} \frac{\varepsilon([d',\al',\beta']) \varepsilon([d'',\al'',\beta''])}{\varepsilon([d,\al,\beta])}
  																			 \binom{k}{k', k''-1} \binom{l-1}{l',l''}  \times \\
& \qquad \qquad \times                                    \frac{d'}{2} \left[ - \frac{ d''\beta'_{j} }{4} + \frac{ d' \beta''_{j} }{4} \right]
																			 \Gamma_{[d',\al',\beta'],k'} \Gamma_{[d'',\al'',\beta''],k''} + \\
& \quad + \sum_{\models[d,\al,\beta]} \frac{\varepsilon([d_{U},\al_{U},\beta_{U}])}{\varepsilon([d,\al,\beta])}
																			 \binom{l-1}{l_{F}, l_{U}} d_{F}
																			 \left[ (\beta_{F})_{j} \frac{d_{U}}{2} - d_{F} \frac{(\beta_{U})_{j}}{2} \right] \times \\
& \quad \quad \quad \times                          \left( - d_{F} \frac{d_{U}}{2} + \half \al_{F} \bullet \al_{U} + \beta_F \bullet \beta_{U} \right)
																			 \tilde{N}_{[d_{F}, \al_{F},\beta_{F}]} \Gamma_{[d_{U},\al_{U},\beta_{U}],k}.
\end{align*}
In our recursive algorithm, we apply this relation for $j$ with $\beta_j \neq 0$ in order to compute $ \Gamma_{[d,\al,\beta],k}$.

\pagebreak
\subsection{Results of combining relations (OGW1)-(OGW3b)}
\subsubsection*{Combining \eqref{OGW1} and \eqref{OGW2}}
We compute $\Gamma_{[d+1,\al,\beta],k-1}$ using \eqref{OGW1} and \eqref{OGW2}.
Subtracting \eqref{OGW1} from \eqref{OGW2}, for $k \geq 2$ we get
\begin{align*} \label{OGW4} \tag{OGW4}
&   \frac{\varepsilon([d,\al,\beta])\varepsilon([1,0,0])}{\varepsilon([d+1,\al,\beta])} \cdot \frac{d+1}{4} \cdot \Gamma_{[1,0,0],0} \Gamma_{[d,\al,\beta],k} =
\\
& = \sum_{\substack{\vdash[d+1,\al,\beta],k-1 \\ ([d',\al',\beta'],k') \neq ([d,\al,\beta],k) \\ ([d'',\al'',\beta''],k'') \neq ([d,\al,\beta],k) }}
                                       \frac{\varepsilon([d',\al',\beta']) \varepsilon([d'',\al'',\beta''])}{\varepsilon([d+1,\al,\beta])} 	 \binom{l+1}{l',l''} \times \\
& \qquad \qquad \quad \times \frac{d'}{2} \left[ \binom{k-2}{k'-1, k''-1} \frac{d''}{2} - \binom{k-2}{k',k''-2} \frac{d'}{2} \right]
        								\Gamma_{[d',\al',\beta'],k'} \Gamma_{[d'',\al'',\beta''],k''}  -
\\
& \qquad - \sum_{\models[d+1,\al,\beta]} \frac{\varepsilon([d_{U},\al_{U},\beta_{U}])}{\varepsilon([d+1,\al,\beta])}
										\binom{l+1}{ l_{F}, l_{U} } \times \\
& \qquad \qquad \qquad \times d_{F}^{2}
										\left( d_{F} \frac{d_{U}}{2} - \half \al_{F} \bullet \al_{U} -\beta_{F} \bullet \beta_{U} \right)
        							    \tilde{N}_{[d_{F}, \al_{F},\beta_{F}]} \Gamma_{[d_{U},\al_{U},\beta_{U}],k-1}  -
\\
&	\qquad  -	\delta_{k,2} \frac{(d+1)^{2}}{4} \tilde{N}_{[\frac{d+1}{2},\frac{\al}{2},\frac{\beta}{2}]} -
\\
&  -\sum_{\substack{\vdash[d+1,\al,\beta],k-1 \\ ([d',\al',\beta'],k') \neq ([d,\al,\beta],k) \\ ([d'',\al'',\beta''],k'') \neq ([d,\al,\beta],k) }}
                                       \frac{\varepsilon([d',\al',\beta']) \varepsilon([d'',\al'',\beta''])}{\varepsilon([d+1,\al,\beta])}   \binom{k-1}{k', k''-1} \times \\
& \qquad \qquad \qquad \times  										 
  										\frac{d'}{2} \left[ \binom{l}{l'-1, l''} \frac{d''}{2} - \binom{l}{l',l''-1} \frac{d'}{2} \right]
                                       \Gamma_{[d',\al',\beta'],k'} \Gamma_{[d'',\al'',\beta''],k''} -
\\
& \qquad -\sum_{\models[d+1,\al,\beta]} \frac{\varepsilon([d_{U},\al_{U},\beta_{U}])}{\varepsilon([d+1,\al,\beta])}
										d_{F} \left( d_{F} \frac{d_{U}}{2} - \half \al_{F} \bullet \al_{U} -\beta_{F} \bullet \beta_{U} \right) \times \\
&	\qquad \qquad \qquad          \times \left[ \binom{l}{ l_{F}-1, l_{U}} \frac{d_{U}}{2} - \binom{l}{l_{F},l_{U}-1} d_{F} \right]
                                       \tilde{N}_{[d_{F}, \al_{F},\beta_{F}]} \Gamma_{[d_{U},\al_{U},\beta_{U}],k-1}.
\end{align*}

\pagebreak
\subsubsection*{Combining \eqref{OGW2} and \eqref{OGW3a}}
Compute $\Gamma_{[d+1,\al,\beta],k+1}$ using \eqref{OGW2} and \eqref{OGW3a}.
Subtracting \eqref{OGW3a} from \eqref{OGW2}, for $\alpha_i \neq 0$ we get

\begin{align*}  \label{OGW5a} \tag{OGW5.a[i]}
&  \frac{\varepsilon([d,\al,\beta])\varepsilon([1,0,0])}{\varepsilon([d+1,\al,\beta])} \cdot \frac{d^2+(1-k)d-k}{4} \cdot \Gamma_{[1,0,0],2} \Gamma_{[d,\al,\beta],k} =  \\
& = \sum_{\substack{\vdash[d+1,\al,\beta],k+1 \\ ([d',\al',\beta'],k') \neq ([d,\al,\beta],k) \\ ([d'',\al'',\beta''],k'') \neq ([d,\al,\beta],k) }}
                                       \frac{\varepsilon([d',\al',\beta']) \varepsilon([d'',\al'',\beta''])}{\varepsilon([d+1,\al,\beta])} \binom{l}{l',l''} \times \\
& \qquad \qquad \quad \times \frac{d'}{2}
    									\left[ \binom{k}{k'-1, k''-1} \frac{d''}{2} - \binom{k}{k',k''-2} \frac{d'}{2} \right]
										\Gamma_{[d',\al',\beta'],k'} \Gamma_{[d'',\al'',\beta''],k''} - \\
& \qquad \quad - \sum_{\models[d+1,\al,\beta]} \frac{\varepsilon([d_{U},\al_{U},\beta_{U}])}{\varepsilon([d+1,\al,\beta])}
										\binom{l}{ l_{F}, l_{U} } \times \\
& \qquad \qquad \qquad \quad \times d_{F}^{2}
										\left( d_{F} \frac{d_{U}}{2} - \half \al_{F} \bullet \al_{U} -\beta_{F} \bullet \beta_{U} \right)
										\tilde{N}_{[d_{F}, \al_{F},\beta_{F}]} \Gamma_{[d_{U},\al_{U},\beta_{U}],k+1} - \\
&	\qquad \quad -	\delta_{k,0} \frac{(d+1)^{2}}{4} \tilde{N}_{[\frac{d+1}{2},\frac{\al}{2},\frac{\beta}{2}]} + \\
& +\left(\frac{2}{\al_{i}} \right) \cdot
      \sum_{\substack{\vdash[d+1,\al,\beta],k+1 \\ ([d',\al',\beta'],k') \neq ([d,\al,\beta],k) \\ ([d'',\al'',\beta''],k'') \neq ([d,\al,\beta],k) }}
                                       \frac{\varepsilon([d',\al',\beta']) \varepsilon([d'',\al'',\beta''])}{\varepsilon([d+1,\al,\beta])}
  										\binom{k+1}{k', k''-1} \binom{l}{l',l''} \times \\
& \qquad \qquad \qquad \qquad \quad \times \frac{d'}{2}
										\left[ - \frac{ d''\al'_{i} }{4} + \frac{ d' \al''_{i} }{4} \right]
										\Gamma_{[d',\al',\beta'],k'} \Gamma_{[d'',\al'',\beta''],k''}  \\
& +\left(\frac{2}{\al_{i}} \right) \cdot \sum_{\models[d+1,\al,\beta]} \frac{\varepsilon([d_{U},\al_{U},\beta_{U}])}{\varepsilon([d+1,\al,\beta])}
										\binom{l}{l_{F}, l_{U}} d_{F}
										\left[ (\al_{F})_{i} \frac{d_{U}}{2} - d_{F} \frac{(\al_{U})_{i}}{2} \right] \times \\
& \qquad \qquad \quad  \qquad \quad \times										 
										\left( - d_{F} \frac{d_{U}}{2} + \half \al_{F} \bullet \al_{U} + \beta_F \bullet \beta_{U} \right)
										\tilde{N}_{[d_{F}, \al_{F},\beta_{F}]} \Gamma_{[d_{U},\al_{U},\beta_{U}],k+1}.
\end{align*}

\pagebreak
\subsubsection*{Combining \eqref{OGW2} and \eqref{OGW3b}}
Compute $\Gamma_{[d+1,\al,\beta],k+1}$ using \eqref{OGW2} and \eqref{OGW3b}.
Subtracting \eqref{OGW3b} from \eqref{OGW2}, for $\beta_j \neq 0$ we get
\begin{align*} \label{OGW5b} \tag{OGW5.b[j]}
&  \frac{\varepsilon([d,\al,\beta])\varepsilon([1,0,0])}{\varepsilon([d+1,\al,\beta])} \cdot \frac{d^2+(1-k)d-k}{4} \cdot \Gamma_{[1,0,0],2} \Gamma_{[d,\al,\beta],k} =  \\
& = \sum_{\substack{\vdash[d+1,\al,\beta],k+1 \\ ([d',\al',\beta'],k') \neq ([d,\al,\beta],k) \\ ([d'',\al'',\beta''],k'') \neq ([d,\al,\beta],k) }}
                                       \frac{\varepsilon([d',\al',\beta']) \varepsilon([d'',\al'',\beta''])}{\varepsilon([d+1,\al,\beta])}
    									\binom{l}{l',l''} \times \\
& \qquad \qquad \quad \times \frac{d'}{2}
    									\left[ \binom{k}{k'-1, k''-1} \frac{d''}{2} - \binom{k}{k',k''-2} \frac{d'}{2} \right]
										\Gamma_{[d',\al',\beta'],k'} \Gamma_{[d'',\al'',\beta''],k''} - \\
& \qquad - \sum_{\models[d+1,\al,\beta]} \frac{\varepsilon([d_{U},\al_{U},\beta_{U}])}{\varepsilon([d+1,\al,\beta])}
										\binom{l}{ l_{F}, l_{U} } \times \\
& \qquad \qquad \qquad \times d_{F}^{2}
										\left( d_{F} \frac{d_{U}}{2} - \half \al_{F} \bullet \al_{U} -\beta_{F} \bullet \beta_{U} \right) -
										\tilde{N}_{[d_{F}, \al_{F},\beta_{F}]} \Gamma_{[d_{U},\al_{U},\beta_{U}],k+1} \\
&	\qquad -	\delta_{k,0} \frac{(d+1)^{2}}{4} \tilde{N}_{[\frac{d+1}{2},\frac{\al}{2},\frac{\beta}{2}]} \\
& +\left(\frac{2}{\beta_{j}} \right) \cdot
      \sum_{\substack{\vdash[d+1,\al,\beta],k+1 \\ ([d',\al',\beta'],k') \neq ([d,\al,\beta],k) \\ ([d'',\al'',\beta''],k'') \neq ([d,\al,\beta],k) }}
                                       \frac{\varepsilon([d',\al',\beta']) \varepsilon([d'',\al'',\beta''])}{\varepsilon([d+1,\al,\beta])}
  										\binom{k+1}{k', k''-1} \binom{l}{l',l''} \times \\
& \qquad \qquad \qquad \qquad \times \frac{d'}{2}
										\left[ - \frac{ d''\beta'_{j} }{4} + \frac{ d' \beta''_{j} }{4} \right]
										\Gamma_{[d',\al',\beta'],k'} \Gamma_{[d'',\al'',\beta''],k''}  \\
& +\left(\frac{2}{\beta_{j}} \right) \cdot \sum_{\models[d+1,\al,\beta]} \frac{\varepsilon([d_{U},\al_{U},\beta_{U}])}{\varepsilon([d+1,\al,\beta])}
										\binom{l}{l_{F}, l_{U}} d_{F} 			
										\left[ (\beta_{F})_{j} \frac{d_{U}}{2} - d_{F} \frac{(\beta_{U})_{j}}{2} \right] \times \\
& \qquad \qquad \qquad \qquad \times \left( - d_{F} \frac{d_{U}}{2} + \half \al_{F} \bullet \al_{U} + \beta_F \bullet \beta_{U} \right)
										\tilde{N}_{[d_{F}, \al_{F},\beta_{F}]} \Gamma_{[d_{U},\al_{U},\beta_{U}],k+1}.
\end{align*}

\section{Welschinger invariants and initial calculations of open Gromov-Witten invariants}
\label{initial calculations}

The open Gromov-Witten invariants are related to the Welschinger invariants by a simple formula.
In this section we recall this formula, due to the second author in~\cite{jake1,jake2}, and use it to compute some of the invariants $\Gamma_{[d,\al,\beta],k}$ for $d=0,1$.

We first relate every $\phi$-invariant relative homology class \( \theta \in H^\phi_2(X,L;\Q) \) to a $\phi$-invariant homology class \( \tilde{\theta} \in H^\phi_2(X;\Q) \) by imitating the doubling of $J$-holomorphic discs.
Denote by
\begin{align*}
\mathfrak{j} : H^\phi_2(X;\Q ) \to H^\phi_2(X,L;\Q)
\end{align*}
the map induced by the long exact sequence of $(X,L)$.
Since the conjugation $\phi$ acts trivially on $L,$ the map $\mathfrak{j}$ is an isomorphism.
Therefore, for every \( \theta \in H^\phi_2(X,L;\Q) \) we can define \( \tilde{\theta} \in H^\phi_2(X;\Q) \) by \( \mathfrak{j} : \tilde{\theta} \mapsto 2 \theta \).

For example, for $X=\CP^2_{r,s}$ and the relative homology class
\begin{align*}
\theta = [d,\al,\beta] = d \tilde{H} - \sum_i \al_i \tilde{F}_i -\sum_j \beta_j \tilde{E}_j  \in H^\phi_2(X,L;\Z),
\end{align*}
we have
\begin{align*}
\tilde{\theta} = (d, \al, \beta,\beta) = d \Line - \sum_i \al_i \ExDiv_i -\sum_j \beta_j \ExDiv_{r+j} - \sum_j \beta_j \ExDiv_{r+s+j} \in  H^\phi_2(X;\Z).
\end{align*}

Next, following~\cite{jake2}, there is a one-to-one correspondence between $Pin$ structures $\mathfrak{p}$ on $L,$ and functions
\( t_{\mathfrak{p}}:H_1(L; \Z/2\Z) \to \Z/2\Z\) that satisfy
\[
t_\mathfrak{p} (x+y)=t(x)+t(y)+x \cdot y + w_1(x)w_1(y)  \mod{2}.
\]
Here $w_1 \in H^1(L;\Z/2\Z)$ is the first Stiefel-Whitney class of the tangent bundle $TL$. The function
$t_{\mathfrak{p}}$ is determined by its values on the generators of $H_1(L; \Z/2\Z)$.
We use $t_{\mathfrak{p}}$ to define another function
\begin{align*}
s_\mathfrak{p} : H^\phi_2(X,L)\to \Z/2\Z, \quad s_{\mathfrak{p}}(\theta) = \frac{ \mu(\theta) - \tilde{\theta} \cdot \tilde{\theta} -2 }{2} + t_{\mathfrak{p}}(\d \theta) +1.
\end{align*}

We denote by $W_{\zeta,l}$ the Welschinger invariant counting real rational $J$-holo\-morphic curves of degree $\zeta$ on  $X$ passing through $l$ pairs of conjugate points and $k=c_1(\zeta) -2l -1$ real points. See~\cite{Welschinger}.
Then the open Gromov-Witten invariants and the Welschinger invariants are related by the formula
\begin{align} \label{ogw_welschinger_relation}
OGW_{\theta,k}(\tau_m^{\otimes l}) = (-1)^{s_\mathfrak{p}} 2^{1-l} W_{\tilde{\theta},l},
\end{align}
where $\tau_m\in H^4(X)$ is the Poincar\'e dual of a point.

We now use relation \eqref{ogw_welschinger_relation} to calculate a small set of $\Gamma_{[d,\al,\beta],k}$ values, which serve as the initial conditions for the recursive calculation of Section~\ref{proof of recursion}.
\begin{lem}\label{lm:init} There is a choice of $Pin$ structure $\p$ such that
\begin{gather*}
\Gamma_{[0,-[i],0],0}  = 2, \qquad
\Gamma_{[1,0,0],2} = 2, \qquad
\Gamma_{[1,0,0],0} =1, \\
\Gamma_{[1,[i],0],1}  = 2, \quad i=1,\ldots,r, \qquad
\Gamma_{[1,0,[j]],0}= 2, \quad j=1,\ldots,s.
\end{gather*}
\end{lem}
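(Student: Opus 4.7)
The plan is to apply formula \eqref{ogw_welschinger_relation} to each of the five invariants, which reduces the problem to a combination of a classical enumerative count and a sign computation via the $Pin$ structure.

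First I would identify the doubled homology class $\tilde\theta = (d,\alpha,\beta,\beta)$ for each $\theta$ appearing in the lemma. Concretely: $[0,-[i],0]$ doubles to $[\ExDiv_i]$; $[1,0,0]$ doubles to $[\Line]$; $[1,[i],0]$ doubles to $[\Line]-[\ExDiv_i]$; and $[1,0,[j]]$ doubles to $[\Line]-[\ExDiv_{r+j}]-[\ExDiv_{r+s+j}]$. In each case $\tilde\theta$ is the class of a smooth rational curve (an exceptional divisor or a line or the proper transform of a line through prescribed blowup points), so the Welschinger count $W_{\tilde\theta,l}$ reduces to an elementary enumeration: a unique exceptional divisor in the first case; the unique line through two real points, through one real point and a blown-up real point, or through a conjugate pair in the remaining cases. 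In every case $W_{\tilde\theta,l}=1$.

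Next I would use the grading axiom to read off the number $l$ of interior marked points in each $\Gamma_{[d,\alpha,\beta],k}=OGW_{[d,\alpha,\beta],k}(\tau_m^{\otimes l})$ via $l=\tfrac{3d-|\alpha|-2|\beta|-k-1}{2}$. This gives $l=0$ in four of the five cases (yielding the factor $2^{1-l}=2$) and $l=1$ in the case $\Gamma_{[1,0,0],0}$ (yielding the factor $2^{1-l}=1$). Combined with $W=1$, formula \eqref{ogw_welschinger_relation} immediately produces the numerical values $2,2,1,2,2$ up to an overall sign $(-1)^{s_\mathfrak{p}(\theta)}$ depending on the class.

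The remaining, and main, obstacle is to verify that there exists a single $Pin$ structure $\mathfrak{p}$ on $L=\RP^2\#r\,\RP^2$ (which is $L$ for $\CP^2_{r,s}$, since the $s$ conjugate pairs leave the real locus unchanged) making all five signs simultaneously positive. I would compute the topological part $\tfrac{\mu(\theta)-\tilde\theta\cdot\tilde\theta-2}{2}+1 \pmod 2$ for each of the five classes using $\mu(\tilde H)=3$, $\mu(\tilde F_i)=1$, $\mu(\tilde E_j)=2$ together with the self-intersections $[\Line]^2=1$, $[\ExDiv_i]^2=-1$, $([\Line]-[\ExDiv_i])^2=0$, $([\Line]-[\ExDiv_{r+j}]-[\ExDiv_{r+s+j}])^2=-1$. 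This produces a fixed $\Z/2$-valued target for $t_\mathfrak{p}(\partial\theta)$ in each case. Since the boundaries $\partial[0,-[i],0]$, $\partial[1,0,0]$, $\partial[1,[i],0]$ lie in the $r+1$ independent classes of $H_1(L;\Z/2)$ that parametrize $Pin$ structures, the values of $t_\mathfrak{p}$ on a basis can be chosen independently by varying $\mathfrak{p}$. The one case involving no boundary of $L$ (namely the class $[1,0,[j]]$, whose disc boundary is contractible in $L$) produces a value of $s_\mathfrak{p}$ that is independent of $\mathfrak{p}$, and I would check by direct computation that this value is already even. The compatibility of the remaining constraints then follows by explicitly solving a linear system over $\Z/2$ in the basis of $H_1(L;\Z/2)$, completing the proof.
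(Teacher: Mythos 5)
Your overall route is the same as the paper's: reduce each $\Gamma_{[d,\al,\beta],k}$ to a Welschinger number via relation~\eqref{ogw_welschinger_relation}, read off $l$ from the grading axiom to get the factor $2^{1-l}$, observe that each $W_{\tilde\theta,l}=1$ by elementary enumeration of lines and exceptional divisors, and then choose the $Pin$ structure so that all five signs are $+1$. The paper simply exhibits the structure directly ($t_{\mathfrak{p}}=1$ on the basis $[\d H],[\d F_1],\ldots,[\d F_r]$ of $H_1(L;\Z/2\Z)$ and checks $t_{\mathfrak{p}}(\d H+\d F_i)=1$ via the quadratic relation), whereas you propose to solve the resulting system of constraints; these are the same argument. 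Your numerical bookkeeping ($l=0$ in four cases, $l=1$ for $\Gamma_{[1,0,0],0}$, hence the values $2,2,1,2,2$ up to sign) is correct.

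There is, however, one concrete error: you assert that the disc boundary of $[1,0,[j]]$ is contractible in $L$, so that $s_{\mathfrak{p}}([1,0,[j]])$ is independent of $\mathfrak{p}$, and you propose to check that this fixed value is even. That is false. A disc in the class $\tilde H-\tilde E_j$ is half of the strict transform of the real line through $y_j,\conj{y_j}$; only the $\tilde E_j$ part has vanishing boundary (the imaginary exceptional divisors are disjoint from $L$), while the $\tilde H$ part contributes, so $\d[1,0,[j]]=[\d H]\neq 0$ in $H_1(L;\Z/2\Z)$. Consequently
$s_{\mathfrak{p}}([1,0,[j]])=\tfrac{\mu-\tilde\theta\cdot\tilde\theta-2}{2}+t_{\mathfrak{p}}([\d H])+1=t_{\mathfrak{p}}([\d H])+1$,
which very much depends on $\mathfrak{p}$; with your assumption $t_{\mathfrak{p}}(0)=0$ the "direct computation" would return $s_{\mathfrak{p}}=1$ and the sign $-2$, contradicting the lemma. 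The fix is immediate: the constraint from $[1,0,[j]]$ is $t_{\mathfrak{p}}([\d H])=1$, identical to the constraint already imposed by $[1,0,0]$, so the system remains consistent and the paper's choice of $\mathfrak{p}$ works. With that correction your argument goes through.
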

\begin{proof}
We choose $\p$ such that $t_{\mathfrak{p}}:H^1(L;\Z/2\Z) \to \Z/2\Z$ evaluates to $1$ on the generators $ [\d H], [\d F_1],\ldots, [\d F_r] \in H^2(L;\Z/2\Z)$.
A simple calculation shows that $ t_\mathfrak{p}( \d H + \d F_i) = 1 $. Therefore, we have
\begin{align*}
s_\mathfrak{p}([0,-[i],0]) = 0, \quad  s_\mathfrak{p}([1,0,0])=0, \quad   s_\mathfrak{p}([1,[i],0]) = 0, \quad  s_\mathfrak{p}([1,0,[j]]) = 0.
\end{align*}
Applying relation \eqref{ogw_welschinger_relation}, we have
\begin{align*}
\Gamma_{[d,\al,\beta],k} = (-1)^{s_\mathfrak{p}} 2^{1-l} W_{(d,\al,\beta,\beta),l}, \quad l= \half (3d -\abs{\al} -2\abs{\beta}-k-1).
\end{align*}
The results now follow from the known fact that $W_{(d,\alpha,\beta,\beta),l} = 1$ for each choice of $d,\alpha,\beta,l,$ considered in the lemma.
\end{proof}

For the remainder of the paper, we use the $Pin$ structure $\p$ given by Lemma~\ref{lm:init}.

\section{Proof of recursion}
\label{proof of recursion}

We will show that the $\Gamma_{[d,\al,\beta],k}$ calculated in Section \ref{initial calculations} and the relations \eqref{OGW1}-\eqref{OGW5b}, together with the closed Gromov-Witten invariants, determine all the open Gromov-Witten invariants $\Gamma_{[d,\al,\beta],k}$  of $\CP^2_{r,s}$ by induction on the integers $d$, \( \al_1,\ldots,\al_r\), \(\beta_1,\ldots,\beta_s \), and $k$.
We can perform induction on these a priori signed integers since by Corollary~\ref{cor:positive intersection consequences} we can assume that $d\geq0$ and $\al_i \geq 0$ unless $[d,\al,\beta]=(0,-[i],0)$ for some $i$, which we take as one of the initial conditions.

Formally, we define a partial order on the indices $([d,\al,\beta],k)$ where $d,k,$ are integers and $\al=(\al_1,\ldots,\al_r), \beta=(\beta_1,\ldots,\beta_s),$ are integral multi-indices.
We say that $([d',\al',\beta'],k')<([d,\al,\beta],k)$ if
\begin{itemize}
\item $d'<d$, or
\item $d'=d$, and $\al'_i \leq \al_i, \beta'_j \leq \beta_j$ for all $i=1,\ldots,r, j=1,\ldots, s,$ with at least one the inequalities strict, or
\item $d'=d, \al'=\al, \beta' = \beta,$ and $k'<k$.
\end{itemize}
We also write $0\leq ([d,\al,\beta],k)$ for $ 0 \leq d, \al_1,\ldots,\al_r, \beta_1,\ldots,\beta_s, k $.
The following lemma is immediate from the open grading axiom.
\begin{lem} \label{lem:finite indices}
Let $([d,\al,\beta],k)$ be an index as above.
The set
\begin{align*}
\set{([d',\al',\beta'],k') : 0 \leq ([d',\al',\beta'],k') < ([d,\al,\beta],k), \; \Gamma_{[d',\al',\beta'],k'} \neq 0}
\end{align*}
 is finite.
\end{lem}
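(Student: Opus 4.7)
The plan is to combine the definition of the partial order with Corollary~\ref{cor:positive intersection consequences} and the open grading axiom to bound each coordinate in turn, so that only finitely many indices remain.

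First I would handle the degree. By the definition of $<$ on indices, any $([d',\al',\beta'],k') < ([d,\al,\beta],k)$ satisfies $d' \leq d$, and the assumption $0 \leq ([d',\al',\beta'],k')$ gives $d' \geq 0$. So $d'$ ranges over a finite set.

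Next I would bound the multi-indices $\al'$ and $\beta'$. The hypothesis $0 \leq ([d',\al',\beta'],k')$ in particular forces $\al'_i \geq 0$ and $\beta'_j \geq 0$, which excludes the exceptional class $[0,-[i],0]$ appearing in Corollary~\ref{cor:positive intersection consequences}. Therefore, once we assume $\Gamma_{[d',\al',\beta'],k'} \neq 0$, that corollary yields $0 \leq \al'_i \leq d'$ and $0 \leq \beta'_j \leq d'$ for every $i,j$. With $d'$ already bounded, $\al'$ and $\beta'$ now lie in a finite box.

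Finally I would bound $k'$ using the open grading axiom applied to $\Gamma_{[d',\al',\beta'],k'} = OGW_{[d',\al',\beta'],k'}(\tau_m^{\otimes l'})$. Non-vanishing requires
\[
l' = \tfrac{1}{2}\bigl(3d' - \abs{\al'} - 2\abs{\beta'} - k' - 1\bigr)
\]
to be a non-negative integer, so $k' \leq 3d' - \abs{\al'} - 2\abs{\beta'} - 1 \leq 3d - 1$, and $k' \geq 0$ by hypothesis. Combining the three bounds, there are only finitely many candidate tuples, proving the lemma. There is essentially no obstacle here; the only subtlety worth flagging is the verification that the positivity assumption $0 \leq ([d',\al',\beta'],k')$ does indeed rule out the exceptional homology class $[0,-[i],0]$ that is otherwise permitted by Corollary~\ref{cor:positive intersection consequences}.
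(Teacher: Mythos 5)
Your argument is correct and matches the paper's, which simply records that the lemma is immediate from the open grading axiom: the order gives $0 \le d' \le d$, and the grading identity $2l' + k' = 3d' - \abs{\al'} - 2\abs{\beta'} - 1$ together with $l', k', \al'_i, \beta'_j \ge 0$ bounds everything at once. Your detour through Corollary~\ref{cor:positive intersection consequences} (and the check that $[0,-[i],0]$ is excluded by the positivity hypothesis) is harmless but not actually needed, since non-negativity plus the grading identity already confine $\al'$, $\beta'$, and $k'$ to a finite box.
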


The following lemmas show that\eqref{OGW1} - \eqref{OGW5b} express $\Gamma_{[d,\al,\beta],k}$ as a function of $\Gamma_{[d',\al',\beta'],k'}$ for $([d',\al',\beta'],k')<([d,\al,\beta],k)$.
\begin{lem} \label{lem:OGW123_recursive}
\begin{enumerate}
\item \eqref{OGW1} relates $\Gamma_{[d,\al,\beta],k}$ (with $l\geq 2$) only to open Gromov-Witten invariants $\Gamma_{[d',\al',\beta'],k'}$ with $d'<d$.
\item \eqref{OGW2} relates $\Gamma_{[d,\al,\beta],k}$ (with $l\geq 1, k\geq 1$) only to open Gromov-Witten invariants $\Gamma_{[d',\al',\beta'],k'}$ with $d'<d$.
\item If $\al_i\neq 0$, then \eqref{OGW3a} relates $\Gamma_{[d,\al,\beta],k}$ (with $l\geq 1$) only to open Gromov-Witten invariants $\Gamma_{[d',\al',\beta'],k'}$ with $d'<d$.
\item If $\beta_j\neq 0$, then \eqref{OGW3b} relates $\Gamma_{[d,\al,\beta],k}$ (with $l\geq 1$) only to open Gromov-Witten invariants $\Gamma_{[d',\al',\beta'],k'}$ with $d'<d$.
\end{enumerate}
\end{lem}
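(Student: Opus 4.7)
The plan is to verify, for each of the four items separately, that every right-hand side summand which contains an open Gromov-Witten factor $\Gamma_{[d',\al',\beta'],k']}$ or $\Gamma_{[d'',\al'',\beta''],k''}$ (arising from a $\vdash$-decomposition) or $\Gamma_{[d_U,\al_U,\beta_U],\cdot]}$ (arising from a $\models$-decomposition) actually satisfies $d'<d$, $d''<d$ or $d_U<d$ respectively. Because $d'+d''=d$ in the $\vdash$-sums and $d_F+d_U/2=d/2$ in the $\models$-sums, this amounts to ruling out the degenerate boundary cases $d'=0$, $d''=0$, $d_F=0$. The mechanism for ruling them out is a case-by-case combination of the explicit prefactors in (OGW1)--(OGW3b) with the vanishing results of Section~\ref{empty moduli}.

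I would first dispose of the $\models$-sums uniformly. In every one of the four relations, the $\tilde N\cdot\Gamma$ summand carries a factor of $d_F$ (as $d_F$ in (OGW1), (OGW3a[i]), (OGW3b[j]) and as $d_F^2$ in (OGW2)) inherited from the quadratic $d_F\bigl(d_F d_U/2-\tfrac12\al_F\bullet\al_U-\beta_F\bullet\beta_U\bigr)$. Hence only $d_F\geq 1$ contributes, in which case $d_U=d-2d_F\leq d-2<d$ as required.

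Next I would handle the $\vdash$-sums. Since $0\leq d',d''$ and $d'+d''=d$, the only obstructions are $d'=0$ (forcing $d''=d$) or $d''=0$ (forcing $d'=d$). For such a summand to be nonzero the corresponding $\Gamma_{[0,\cdot,\cdot],\cdot]}$ must be nonzero, and Lemma~\ref{lem:OGW_of_deg=0} then pins it down to $\al=-[i]$, $\beta=0$, $k=0$, $l=0$. I would read off in this residual case that the summand is already killed by an explicit factor. When $d'=0$, the prefactor $d'/2$ appearing in all four relations kills the term directly. When $d''=0$, so $k''=l''=0$, the binomial $\binom{k}{k',k''-1}=\binom{k}{k',-1}$ present in (OGW1), (OGW3a[i]), (OGW3b[j]) vanishes by the negative-index convention; in (OGW2) the same conclusion follows from $\binom{k-1}{k'-1,k''-1}=\binom{k-1}{k'-1,-1}=0$ together with $\binom{k-1}{k',k''-2}=\binom{k-1}{k',-2}=0$. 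Combining these observations, only $\vdash$-terms with $d',d''\geq 1$ contribute, giving $d',d''\leq d-1<d$.

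The substantive step is the pure bookkeeping involved in checking that every degenerate branch is killed by the correct explicit factor: each of the four relations carries several binomials and a prefactor with its own pattern, and the killing mechanism differs between (OGW1)--(OGW3b). The trick that makes the argument short is to invoke Lemma~\ref{lem:OGW_of_deg=0} at the outset, so that one only needs to test the single boundary configuration $[0,-[i'],0]$ (with $k'=l'=0$) rather than enumerating all multi-indices $\al'$, $\beta'$ with $d'=0$. The hypotheses $l\geq 2$ in item~(1), $l\geq 1,k\geq 1$ in item~(2), and $l\geq 1$ in items~(3) and (4) play no role beyond ensuring that the combined relations (OGW1)--(OGW3b) are actually available; they do not enter the degeneracy check.
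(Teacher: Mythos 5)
Your argument is correct and follows the same route as the paper: the $d_F$ (or $d_F^2$) prefactor disposes of the $\models$-sums, the $d'/2$ prefactor kills the $d'=0$ terms, and Lemma~\ref{lem:OGW_of_deg=0} combined with the vanishing of the negative-index multinomials kills the $d''=0$ terms. Your explicit check of the two binomials in \eqref{OGW2} is a slightly more careful rendering of the paper's ``the same arguments apply'' remark, but the substance is identical.
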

\begin{proof}
Examine the first sum in \eqref{OGW1}.
It vanishes for $d'=0$, so $d'+d''=d$ implies $d''<d$.
Assume that $d'=d, d''=0$. Then by Lemma \ref{lem:OGW_of_deg=0}  we must have $\Gamma_{[0,\al'',\beta''],k''}=0$ or $k''=0$.
In both cases the summand vanishes, in the first case because the open Gromov-Witten invariant vanishes, and in the second case because the binomial coefficient $\binom{k}{k',-1}=0 $ vanishes.

Examine the second sum.
Assume $d_U=d$. Then $d_F+\frac{d_U}{2} = \frac{d}{2}$ implies $d_F=0$, so the sum vanishes.
Applying the same arguments gives the analogous results for \eqref{OGW2}, \eqref{OGW3a},\eqref{OGW3b}.
\end{proof}

Next we show that applying \eqref{OGW4} reduces either $d$, one of the $\al_i$'s, one of the $\beta_j$'s, or $k$.
\begin{lem} \label{lem:OGW4_recursive}
\eqref{OGW4} relates $\Gamma_{[d,\al,\beta],k}$ (with $k\geq 2$ boundary points) only to open Gromov-Witten invariants $\Gamma_{[d',\al',\beta'],k'}$ with \[ ([d',\al',\beta'],k') < ([d,\al,\beta],k). \]
\end{lem}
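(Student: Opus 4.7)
The plan is to check summand by summand that every term on the right-hand side of \eqref{OGW4} is either identically zero or involves only open invariants $\Gamma_{[d',\al',\beta'],k'}$ with $([d',\al',\beta'],k') < ([d,\al,\beta],k)$. The terms split into products $\Gamma_{[d',\al',\beta'],k'}\Gamma_{[d'',\al'',\beta''],k''}$ indexed by $\vdash[d+1,\al,\beta],k-1$, products $\tilde{N}_{[d_F,\al_F,\beta_F]}\Gamma_{[d_U,\al_U,\beta_U],k-1}$ indexed by $\models[d+1,\al,\beta]$, and a single $\delta_{k,2}$-term that involves only the closed invariant $\tilde{N}$; the last contains no open invariants and so is inert for the claim.

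For the $\vdash$ sums we have $d'+d''=d+1$. I would first dispose of the boundary cases. When $d'=0$ every coefficient carries an explicit factor $d'/2$ and the summand vanishes. When $d''=0$, Lemma~\ref{lem:OGW_of_deg=0} forces $k''=0$ on any non-vanishing $\Gamma_{[0,\al'',\beta''],k''}$, and substituting $k''=0$ into the three relevant binomials $\binom{k-2}{k'-1,k''-1}$, $\binom{k-2}{k',k''-2}$, $\binom{k-1}{k',k''-1}$ produces negative lower entries, hence zero by convention. Thus the non-vanishing $\vdash$ summands satisfy $1\le d',d''\le d$; if both inequalities are strict, both factors sit at strictly smaller $d$ and we are done by the first criterion of the partial order.

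This leaves the symmetric cases $d'=d,d''=1$ and $d''=d,d'=1$; I describe the first. Here Lemma~\ref{lem:OGW_of_deg=1} restricts the degree-one factor to five possibilities. Case~(2) forces $(\al',\beta',k')=(\al,\beta,k)$, i.e.\ $([d',\al',\beta'],k')=([d,\al,\beta],k)$, which is precisely the index excluded from the sum by the cancellation that produced \eqref{OGW4}. In case~(1) we obtain $k'=k-2<k$ with $\al'=\al$, $\beta'=\beta$, placing $([d,\al,\beta],k-2)$ below $([d,\al,\beta],k)$ via the third criterion of the partial order, which uses the hypothesis $k\geq 2$. Cases~(3)--(5) give respectively $\al'=\al-[i]$, $\al'=\al-[i]-[j]$, or $\beta'=\beta-[j]$; each is strictly componentwise smaller than $(\al,\beta)$ whenever the relevant component of $\al$ or $\beta$ is positive, and otherwise Corollary~\ref{cor:positive intersection consequences} forces the corresponding $\Gamma$-factor to vanish. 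The degree-one factor sits below $([d,\al,\beta],k)$ by the first criterion when $d>1$, and by a short direct check using $k\geq 2$ together with the shape of $(\al,\beta)$ in each of the cases~(1), (3)--(5) when $d=1$.

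For the $\models$ sums one has $2d_F+d_U=d+1$. Every coefficient in the $\models$ blocks of \eqref{OGW4} carries a factor of $d_F$ (one carries $d_F^2$), so $d_F=0$ terms vanish. If $d_U=0$, then $k-1\geq 1$ under the hypothesis $k\geq 2$, and Lemma~\ref{lem:OGW_of_deg=0} forces $\Gamma_{[0,\al_U,\beta_U],k-1}=0$. In all other cases $d_F,d_U\geq 1$, giving $d_U=d+1-2d_F\leq d-1<d$ and putting $([d_U,\al_U,\beta_U],k-1)$ below $([d,\al,\beta],k)$ by the first criterion. The main obstacle is the degree-one boundary case: the argument succeeds precisely because the diagonal decomposition flagged by case~(2) of Lemma~\ref{lem:OGW_of_deg=1} coincides with the single index excluded when \eqref{OGW4} was produced by subtracting \eqref{OGW1} from \eqref{OGW2}; once that match is recognised, everything else is bookkeeping on binomials and the partial order.
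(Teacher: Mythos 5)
Your proof is correct and follows essentially the same route as the paper's: bound $d',d'',d_U$ using the vanishing coefficients, Lemma~\ref{lem:OGW_of_deg=0} and the integrality of $d_F$; get the componentwise bounds on $\al',\al'',\beta',\beta''$ from Corollary~\ref{cor:positive intersection consequences}; and dispose of the diagonal decomposition $[d,\al,\beta]+[1,0,0]$ via Lemma~\ref{lem:OGW_of_deg=1} together with the term explicitly excluded when \eqref{OGW4} was formed. Your case analysis is somewhat more explicit than the paper's (which delegates the degree bounds to the argument of Lemma~\ref{lem:OGW123_recursive}), but there is no substantive difference.
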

\begin{proof}
We prove that $d',d'',d_U \leq d$ in all the sums involved by following the argument in Lemma \ref{lem:OGW123_recursive}. After all, \eqref{OGW4} is just the difference of \eqref{OGW1} and \eqref{OGW2} applied to $\Gamma_{[d+1,\al,\beta],k-1}$.
The sums involving closed invariants do not include summands with $d_U=d$, since $ d_F+\frac{d_U}{2} = \frac{d+1}{2}$ would imply $d_F=\half$ which is not integral. We ignore these sums for the rest of the proof, focusing on the sums not containing closed invariants.

Next we show that $\al'_i,\al''_i \leq \al_i$ for all $i$ and $\beta'_j,\beta''_j \leq \beta_j$ for all $j$.
We already know that $d',d''\leq d$ and $d'+d''=d+1$, so $d',d'' \geq 1$, and by Corollary~\ref{cor:positive intersection consequences} we have
\( \al'_i,\al''_i,\beta'_j,\beta''_j \geq 0 \)
 for all $i,j$.
Our claim now follows from
\( \al'+\al''=\al, \beta'+\beta''=\beta. \)

So far we showed that $d',d''\leq d$, that $\al'_i,\al''_i \leq \al_i$ for all $i$, and that $\beta'_j, \beta''_j \leq \beta_j$ for all $j$.
To finish the proof we have to show that no summand contains $\Gamma_{[d,\al,\beta],K}$ with $K\geq k$.
Assume $[d',\al',\beta']=[d,\al,\beta]$ (the argument for $[d'',\al'',\beta'']=[d,\al,\beta]$ is the same).
Then $[d'',\al'',\beta'']=[1,0,0]$ and by Lemma \ref{lem:OGW_of_deg=1} either $\Gamma_{[d'',\al'',\beta''],k''}$ vanishes or $k''=0,2$.
The binomial coefficients then imply that $k'+k''=k$, therefore $k'=k,k-2$.
The case $([d',\al',\beta'],k')=([d,\al,\beta],k)$ was explicitly removed from the sum, so $k'=k-2$ and we're done.
\end{proof}

\begin{lem} \label{lem:OGW5_recursive}
The relation \eqref{OGW5a} relates $\Gamma_{[d,\al,\beta],k}$ having $l=0$ and $\al_i\neq 0$ for some $i$ only to open Gromov-Witten invariants $\Gamma_{[d',\al',\beta'],k'}$ with $ ([d',\al',\beta'],k') < ([d,\al,\beta],k)$.
The relation \eqref{OGW5b} relates $\Gamma_{[d,\al,\beta],k}$ having $l=0$ and $\beta_j\neq 0$ for some $j$ only to open Gromov-Witten invariants $\Gamma_{[d',\al',\beta'],k'}$ with $ ([d',\al',\beta'],k') < ([d,\al,\beta],k)$.
\end{lem}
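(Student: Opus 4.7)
The plan is to follow the template established in the proof of Lemma \ref{lem:OGW4_recursive}, since \eqref{OGW5a} and \eqref{OGW5b} have the same general shape as \eqref{OGW4}: they arise from combining \eqref{OGW2} with \eqref{OGW3a} or \eqref{OGW3b} applied to the index $([d+1,\al,\beta],k+1)$, with $\Gamma_{[d,\al,\beta],k}$ isolated on the left-hand side (up to the nonzero constant $\Gamma_{[1,0,0],2}=2$ from Lemma \ref{lm:init}). I need to verify that every summand on the right-hand side involves only indices $([d',\al',\beta'],k')<([d,\al,\beta],k)$ in the partial order.

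First I would establish $d',d'',d_U\leq d$ in every non-vanishing summand by essentially repeating the arguments of Lemma \ref{lem:OGW123_recursive}. Since the partitions run over $\vdash[d+1,\al,\beta],k+1$, the only worrying case is $d'=d+1,d''=0$ (or its mirror $d''=d+1,d'=0$). Lemma \ref{lem:OGW_of_deg=0} forces $k''=0$ on the degree-zero component, and then the multinomial coefficients $\binom{k}{k'-1,k''-1}$, $\binom{k}{k',k''-2}$, and $\binom{k+1}{k',k''-1}$ all vanish owing to negative entries; the symmetric case $d'=0$ is killed by the explicit prefactor $d'/2$. For the closed-invariant sums indexed by $\models[d+1,\al,\beta]$, the constraint $2d_F+d_U=d+1$ rules out $d_U=d$ (which would require $d_F=1/2$, non-integral) while $d_U=d+1$ is killed by the $d_F$ or $d_F^2$ prefactor, so in fact $d_U\leq d-1$. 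Next, I would apply Corollary \ref{cor:positive intersection consequences} to conclude $\al'_i,\al''_i\leq\al_i$ and $\beta'_j,\beta''_j\leq\beta_j$ whenever $d',d''\geq 1$, using $\al'+\al''=\al$ and $\beta'+\beta''=\beta$.

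The main obstacle, and the only essentially new ingredient compared to Lemma \ref{lem:OGW4_recursive}, is the potential appearance of $\Gamma_{[d,\al,\beta],k+1}$, which would be strictly above $\Gamma_{[d,\al,\beta],k}$ in the partial order and would therefore derail the induction. This is where the hypothesis $l=0$ becomes essential: if $[d',\al',\beta']=[d,\al,\beta]$, then necessarily $[d'',\al'',\beta'']=[1,0,0]$, and Lemma \ref{lem:OGW_of_deg=1} forces $k''\in\{0,2\}$, whence $k'\in\{k+1,k-1\}$. The constraint in $\vdash[d+1,\al,\beta],k+1$ requires $l'=\tfrac{1}{2}(\mu([d,\al,\beta])-k'-1)$ to be a non-negative integer; but $l=0$ gives $\mu([d,\al,\beta])=k+1$, so $k'=k+1$ and $k'=k-1$ produce $l'=-1/2$ and $l'=1/2$ respectively, neither a valid partition index. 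The symmetric situation $[d'',\al'',\beta'']=[d,\al,\beta]$ is ruled out by the identical calculation for $l''$.

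Once these summands are discarded, what remains are summands with $d'=d,d''=1$ but $[d'',\al'',\beta'']\neq[1,0,0]$, which forces some strict inequality $\al'_i<\al_i$ or $\beta'_j<\beta_j$ and places $([d',\al',\beta'],k')$ strictly below $([d,\al,\beta],k)$ in the partial order; the closed-invariant summands satisfy the order automatically via the bound $d_U\leq d-1$ obtained above. The argument for \eqref{OGW5b} is verbatim identical with $\al_i$ replaced by $\beta_j$ and $(\al_F)_i,(\al_U)_i$ replaced by $(\beta_F)_j,(\beta_U)_j$ in the $\tfrac{2}{\beta_j}$-term, so no additional analysis is needed.
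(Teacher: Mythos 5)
Your overall strategy mirrors the paper's: reduce the bounds $d',d'',d_U\leq d$ and $\al'_i,\al''_i\leq\al_i$, $\beta'_j,\beta''_j\leq\beta_j$ to the arguments of Lemmas \ref{lem:OGW123_recursive} and \ref{lem:OGW4_recursive}, and then isolate the one genuinely new danger, namely a summand with $[d',\al',\beta']=[d,\al,\beta]$ and $k'>k$. However, the arithmetic in that key step is off. The binomial coefficients in \eqref{OGW5a} are $\binom{k}{k'-1,k''-1}$, $\binom{k}{k',k''-2}$ and $\binom{k+1}{k',k''-1}$, all of which force $k'+k''=k+2$, not $k+1$. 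With $k''\in\{0,2\}$ from Lemma~\ref{lem:OGW_of_deg=1} this gives $k'\in\{k+2,k\}$, not $\{k+1,k-1\}$. Consequently your claim that both candidate values of $k'$ are excluded because $l'$ is a half-integer is false: for $k'=k$ one gets $l'=\half(\mu([d,\al,\beta])-k-1)=0$, a perfectly valid index, and that summand is removed only by the explicit exclusion $([d',\al',\beta'],k')\neq([d,\al,\beta],k)$ built into the sums of \eqref{OGW5a} (it is precisely the term that was moved to the left-hand side), which your argument never invokes; while for $k'=k+2$ one gets $l'=-1$, so the term is excluded by the requirement $l'\geq 0$ in $\vdash[d+1,\al,\beta],k+1$ --- equivalently, $\Gamma_{[d,\al,\beta],k+2}$ vanishes by the grading axiom, which is exactly how the paper argues. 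Your misidentification of the dangerous term as $\Gamma_{[d,\al,\beta],k+1}$ rather than $\Gamma_{[d,\al,\beta],k+2}$ stems from the same slip.

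The rest of the proposal --- the treatment of $d''=0$ via Lemma~\ref{lem:OGW_of_deg=0} and vanishing binomials, the exclusion of $d_U=d$ by non-integrality of $d_F$, and the deduction of the $\al$, $\beta$ bounds from Corollary~\ref{cor:positive intersection consequences} --- is correct and agrees with the paper. So the gap is localized and easily repaired: recompute $k'+k''=k+2$, invoke the explicit exclusion for $k'=k$, and use $l'\geq 0$ (equivalently the grading axiom) to kill the $k'=k+2$ term.
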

\begin{proof}
Showing that
\[
d',d''\leq d, \qquad d_U < d, \qquad \al'_i,\al''_i \leq \al_i, \qquad \beta'_j,\beta''_j \leq \beta_j,
\]
is the same as the last proof. It remains to prove that all summands containing $\Gamma_{[d,\al,\beta],K}$ with $K \geq k $ vanish.

For $[d',\al',\beta']=[d,\al,\beta]$ we have  $[d'',\al'',\beta'']=[1,0,0]$.
Therefore $\Gamma_{[d'',\al'',\beta''],k''}$ vanishes or $k''=0,2$.
The binomial coefficients now imply  $k'+k''=k+2$, so $k'=k+2,k$.
As the case $k'=k$ is explicitly removed from the sum, we are left with proving that $\Gamma_{[d,\al,\beta],k+2}$ vanishes.
Applying the grading axiom, we note that  $0=\half (\mu([d,\al,\beta])-k-1)$ since $\Gamma_{[d,\al,\beta],k} $ has $0$ interior points.
Now we apply the grading axiom to $\Gamma_{[d,\al,\beta],k+2}$ and we get \[
l= \half (\mu([d,\al,\beta])-(k+2)-1) =\half (\mu([d,\al,\beta])-k-1)-1 = -1,
\]
so  $\Gamma_{[d,\al,\beta],k+2} $ vanishes.

The same argument gives the analogous result for \eqref{OGW5b}.
\end{proof}

We now collect the results of the last lemmas and the calculations of $\Gamma_{[d,\al,\beta],k}$ for $d=0,1,$ to prove the main theorem.

\begin{thm} \label{recursive_OGW_relation}
The open Gromov-Witten invariants \( \Gamma_{[d,\al,\beta],k} \) are recursively determined by the relations \eqref{OGW1} - \eqref{OGW5b}, the vanishing results of Corollary~\ref{cor:positive intersection consequences}, the closed Gromov-Witten invariants \( \tilde{N}_{[d,\al,\beta]} = \sum_{c} N_{(d,\al,\beta+c,\beta-c)} \), and the following initial values:
\begin{enumerate}
\item $\Gamma_{[0,-[i],0],0}=2$ for $i=1,\ldots,r$,
\item $\Gamma_{[1,0,0],2}=2,$
\item $\Gamma_{[1,0,0],0}=1,$
\item $\Gamma_{[1,[i],0],1}=2$ for $i=1,\ldots,r$,
\item $\Gamma_{[1,0,[j]],0}=2$ for $j=1,\ldots,s$.
\end{enumerate}
\end{thm}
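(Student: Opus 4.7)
Our plan is strong induction on the partial order on indices $([d,\al,\beta],k)$ defined just before the theorem. By Corollary~\ref{cor:positive intersection consequences}, the invariant $\Gamma_{[d,\al,\beta],k}$ vanishes outside the range $d\geq 0$, $0\leq \al_i,\beta_j \leq d$ together with the exceptional index $[0,-[i],0]$, so we restrict attention to this range. Lemma~\ref{lem:finite indices} ensures that each step of the recursion invokes only finitely many previously computed invariants, so the scheme is well-posed.

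The base case $d=0$ is dispatched by Lemma~\ref{lem:OGW_of_deg=0}: the only non-vanishing invariants are $\Gamma_{[0,-[i],0],0}=2$, furnished by initial value~(1). For the inductive step, fix $([d,\al,\beta],k)$ with $d\geq 1$ and assume the theorem for all strictly smaller indices. The open grading axiom forces
\[
l = \tfrac{1}{2}\bigl(3d - \abs{\al} - 2\abs{\beta} - k - 1\bigr);
\]
if $l<0$ or $l\notin\Z$, the invariant vanishes, so we may assume $l\geq 0$.

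The inductive step proceeds by case analysis on $(l,k)$, each case invoking exactly one of \eqref{OGW1}--\eqref{OGW5b}, whose recursive structure is guaranteed by Lemmas~\ref{lem:OGW123_recursive}, \ref{lem:OGW4_recursive}, and~\ref{lem:OGW5_recursive}. If $l\geq 2$, apply \eqref{OGW1}. If $l=1$ and $k\geq 1$, apply \eqref{OGW2}. If $l=1$ and $k=0$, then $\abs{\al}+2\abs{\beta}=3d-3$; for $d=1$ this forces $\al=\beta=0$ and gives the initial value $\Gamma_{[1,0,0],0}$, while for $d\geq 2$ some $\al_i$ or $\beta_j$ is positive and we apply \eqref{OGW3a} or \eqref{OGW3b}. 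If $l=0$ and $k\geq 2$, apply \eqref{OGW4}. Finally, if $l=0$ and $k\in\{0,1\}$, then $\abs{\al}+2\abs{\beta}=3d-1-k\geq 1$, so some $\al_i$ or $\beta_j$ is positive; the subcase $d=k=1$ is the initial value $\Gamma_{[1,[i],0],1}$, and all remaining subcases invoke \eqref{OGW5a} or \eqref{OGW5b}. The one $d=1$ invariant not directly listed among the initial values, $\Gamma_{[1,[i]+[j],0],0}$ with $i\neq j$, falls under this last branch and reduces to $d\leq 1$ initial values.

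The main obstacle is the bookkeeping step of verifying that the coefficient multiplying $\Gamma_{[d,\al,\beta],k}$ on the left-hand side is nonzero in each invoked relation. These coefficients are $1$, $1$, $-\al_i/2$, $-\beta_j/2$, $\tfrac{d+1}{4}\Gamma_{[1,0,0],0}=\tfrac{d+1}{4}$, and $\tfrac{(d-k)(d+1)}{4}\Gamma_{[1,0,0],2}=\tfrac{(d-k)(d+1)}{2}$ respectively, using initial values~(2) and~(3). The delicate factor is $(d-k)$ in the last one: it vanishes at $d=k$, but when $l=0$ and $k\geq 2$ we instead use \eqref{OGW4}, and the only surviving conflict $d=k=1$ is precisely the initial value $\Gamma_{[1,[i],0],1}$. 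Thus every admissible index is covered exactly once by a relation with nonzero leading coefficient, and the inductive hypothesis completes the argument.
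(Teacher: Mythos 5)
Your proof is correct and follows essentially the same route as the paper: induction on the same partial order, the same case analysis on $(l,k)$ selecting among \eqref{OGW1}--\eqref{OGW5b}, and the same nonvanishing checks on the leading coefficients, with the $d=1$ invariants (including $\Gamma_{[1,[i]+[j],0],0}$ via \eqref{OGW5a}) handled just as in the paper. Your factorization $d^2+(1-k)d-k=(d-k)(d+1)$ is a slightly cleaner way to see the nonvanishing of the \eqref{OGW5a}/\eqref{OGW5b} coefficient than the paper's enumeration of its roots.
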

\begin{proof}
We prove by induction on $([d,\al,\beta],k)$.

\emph{Proof of induction base.}
All the initial conditions were verified in Lemma~\ref{lm:init}.
By Corollary \ref{cor:positive intersection consequences} and the grading axiom, $\Gamma_{[d,\al,\beta],k}=0$ for $ 0 \not \leq ([d,\al,\beta],k)$, except for the $r$ initial values $ ([d,\al,\beta],k)=([0,-[i],0],0).$

For $d=0,1,$ we know by Lemmas~\ref{lem:OGW_of_deg=0} and~\ref{lem:OGW_of_deg=1} that the invariants vanish except for the cases
\begin{align*}
\Gamma_{[0,-[i],0],0}, \Gamma_{[1,0,0],2}, \Gamma_{[1,0,0],0}, \Gamma_{[1,[i],0],1}, \Gamma_{(1,[i]+[j]),0}, \Gamma_{[1,0,[j]],0}.
\end{align*}
These invariants are all initial values except $\Gamma_{[1,[i]+[j],0],0}$, which are calculated using the initial values and equation \eqref{OGW5a}.

\emph{Proof of induction step.}
We show that for $d\geq 2$ we can always apply one of the relations \eqref{OGW1}- \eqref{OGW5b}.
By Lemmas \ref{lem:OGW123_recursive}, \ref{lem:OGW4_recursive}, and \ref{lem:OGW5_recursive}, the relations \eqref{OGW1}-\eqref{OGW5b} determine $\Gamma_{[d,\al,\beta],k}$ as a function of  $\Gamma_{[d',\al',\beta'],k'}$ for $ 0 \leq([d',\al',\beta'],k') < ([d,\al,\beta],k)$ and of $\Gamma_{[0,-[i],0],0}$.
Since the set of such invariants which do not vanish is finite by Lemma~\ref{lem:finite indices}, this completes the proof.

Let $([d,\al,\beta],k)$ be an index with $d \geq 2$.
We cover all possible cases and show that we can apply a recursive relation.
\begin{enumerate}
\item If $l= \half (\mu([d,\al,\beta])-k-1 \geq 2$ we can apply \eqref{OGW1}.
\item If $l=1$ then either $k\geq 1$ and we can apply \eqref{OGW2} or $k=0$.
In the latter case we use the grading axiom to see that $ \abs{\al}+2\abs{\beta}=3d-3 >0$, so there exists some $\al_i\neq 0$ or some $\beta_j \neq 0$, and therefore we can apply \eqref{OGW3a} or \eqref{OGW3b}.
\item For the case $l=0$ we will use \eqref{OGW4}, \eqref{OGW5a}, and \eqref{OGW5b}.
To calculate $\Gamma_{[d,\al,\beta],k}$ using these relations we need its coefficient to be non zero.
By Lemma~\ref{lm:init} we have $\Gamma_{[1,0,0],0} = 1,$ so for $k\geq 2$ we can apply \eqref{OGW4}.
Therefore, we only have to deal we the cases $k=0,1$. We do this using relations \eqref{OGW5a} and \eqref{OGW5b}.
The coefficient of $\Gamma_{[d,\al,\beta],k}$ does not vanish since $\Gamma_{[1,0,0],2} = 2$ by Lemma~\ref{lm:init}, and $d^2+(1-k)d-k =0 $ only when $k=0, d=0,-1$ or $k=1,d=\pm 1$.
By the grading axiom, we have $\abs{\al} + 2\abs{\beta} = 3d-k-1 >0$. So, there exists $\al_i \neq 0$ or $\beta_j \neq 0$, and we can apply either \eqref{OGW5a} or \eqref{OGW5b}.
\qedhere
\end{enumerate}
\end{proof}

Theorem \ref{complete OGW} is an immediate corollary.

\section{Example computations}\label{sec:example}
In this section we list some calculated values of open Gromov-Witten invariants.
The invariants were calculated by a Maple program implementing the recursive algorithm implied by Theorem \ref{recursive_OGW_relation}.

We first reiterate our notations.
For an integer $d\in \Z$ and integral multi-indices \( \al=(\al_1,\ldots,\al_r), \beta=(\beta_1,\ldots,\beta_s), \) we write \( [d,\al,\beta] \in H^\phi_2(X,L;\Z) \) for the relative homology class
\begin{align*}
[d,\al,\beta] = d \tilde{H} -\sum_i \al_i \tilde{F}_i -\sum_j \tilde{E}_j.
\end{align*}
The homology classes \(\tilde{H}, \tilde{F}_i, \tilde{E}_j \in H^\phi_2(X,L;\Z)  \) are the $\phi$-invariant projections of relative homology classes represented by $H$ a hemisphere of a real line with real boundary, $F_i$ a hemisphere of the $i^{th}$ exceptional divisor with real boundary, and $\ExDiv_{r+j}$ an imaginary exceptional divisor.

For notational convenience we write $0$ for the empty multi-index and use exponential notation for multi-indices of repetitive values, for example
\begin{align*}
[10,(2^3),(3^2)]  =[10, (2,2,2), (3,3)] = 10 \tilde{H} -2\tilde{F}_1 -2\tilde{F}_1 -2\tilde{F}_3 -3\tilde{E}_1 -3\tilde{E}_2.
\end{align*}

We denote by \( \Gamma_{[d,\al,\beta],k} = OGW_{[d,\al,\beta],k} (\tau_m^{\otimes l} ) \) the open Gromov-Witten invariant of degree
\( [d,\al,\beta] \in H^\phi_2(X,L;\Z) \) and $k=0,1,2,\ldots$ boundary points and \( l= \half( 3d -\abs{\al} -2\abs{\beta} -k-1) \) internal points.

\begin{table}[ht]
\centering
\caption{Open Gromov-Witten invariants of a $\CP^2$ blowup  at $r$ real points.}
\begin{tabular}{|l|l|l|}
\hline
$d=6, \beta=0$ & $d=7, \beta=0$ & $d=8, \beta=0$ \\
\hline
$ \Gamma_{[6, (2^5),0], 7} = 8320 $ & $ \Gamma_{[7, (2^5),0], 10} = 4224960 $ & $ \Gamma_{[8, (2^5),0], 13} = -2824394880 $ \\
$ \Gamma_{[6, (2^6),0], 5} = -2000 $ & $ \Gamma_{[7, (2^6),0], 8} = -1226256 $ & $ \Gamma_{[8, (2^6),0], 11} = 906723840 $ \\
$ \Gamma_{[6, (2^7),0], 3} =  448 $ & $ \Gamma_{[7, (2^7),0], 6} = 348054 $  & $ \Gamma_{[8, (2^7),0], 9} =  -287936880 $ \\
$ \Gamma_{[6, (2^8),0], 1} = -96 $ & $ \Gamma_{[7, (2^8),0], 4} = -96256 $  & $ \Gamma_{[8, (2^8),0], 7}  =  90364160 $   \\
& $ \Gamma_{[7, (2^9),0], 2} = 25820 $  & $ \Gamma_{[8, (2^9),0], 5} = -27996424 $  \\
& $ \Gamma_{[7, (2^{10}),0], 0} =-6672 $  & $ \Gamma_{[8, (2^{10}),0], 3} = 8551776 $  \\
& & $ \Gamma_{[8, (2^{11}),0], 1} = -2571612 $ \\
\hline
\end{tabular}
\end{table}

\begin{table}[ht]
\centering
\begin{tabular}{|l|}
\noalign{\smallskip}
\hline
$d=10, \beta=0$ \\
\hline
$ \Gamma_{[10, (3^5),0], 14} = -276649331840 $ \\
$ \Gamma_{[10, (3^6),0], 11} = 12995931360 $ \\
$ \Gamma_{[10, (3^7),0], 8} =  559349440 $ \\
$ \Gamma_{[10, (3^8),0], 5}  =   -21525168  $ \\
$ \Gamma_{[10, (3^9),0], 2} =   -713472 $ \\
\hline
\end{tabular}
\end{table}

\begin{table}[ht]
\centering
\caption{Open Gromov-Witten invariants of a $\CP^2$ blowup  at $r$ real points and $s$ pairs of conjugate points.}
\begin{tabular}{|l|l|}
\hline
$d=6$ & $d=7$ \\
\hline
$ \Gamma_{[6, 0, (2^4)], 1} = -12 $ &  $\Gamma_{[7, (3), (2^4)], 1} = 48 $ \\
$ \Gamma_{[6, (2^2), (2^3)], 1} = -20 $ &  $\Gamma_{[7,0,(2^5)],0} = 48 $ \\
$ \Gamma_{[6, (2^4], (2^2)], 1} = -36 $ &  $\Gamma_{[7, (2^2), (2^4)], 0} = -48 $ \\
$ \Gamma_{[6, (2^6), (2)], 1} = -60 $ &  $\Gamma_{[7, (2^4), (2^3)], 0} = -384 $ \\
$ \Gamma_{[6, (2^8), 0], 1} =  -92 $ &   $ \Gamma_{[7, (2^6), (2^2)], 0} = -1216 $\\
$ \Gamma_{[6, 0, (2^3)], 5} = -156  $ & $ \Gamma_{[7, (2^8), (2)], 0} = -3056 $ \\
$ \Gamma_{[6, (2^2), (2^2)], 5} = -472 $ & $ \Gamma_{[7, (2^{10}), 0], 0} = -6672 $ \\
$ \Gamma_{[6, (2^4), (2)], 5} = -1044 $ & \\
$ \Gamma_{[6, (2^6), 0], 5} = -2000 $ &  \\
\hline
\end{tabular}
\end{table}

\pagebreak

Recall that the open Gromov-Witten invariant $\Gamma_{[d,\al,\beta],k} $ is equivalent to the Welschinger invariant counting curves of degree
\( d [\Line]- \sum_i [\ExDiv_i] - \sum_j ([\ExDiv_{r+j}] + [\ExDiv_{r+j+s}]) \)
passing through $k$ real points and $l=\half( 3d -\abs{\al} -2\abs{\beta} -k-1) $ complex conjugate pairs of points by the following relation
\begin{align*}
\Gamma_{[d,\al,\beta],k} = \pm 2^{1-l} W_{d [\Line]-\sum_i [\ExDiv_i] - \sum_j ([\ExDiv_{r+j}] + [\ExDiv_{r+j+s}]), l}.
\end{align*}
A precise formula for the sign is given in Section~\ref{initial calculations}.

\vspace{.5 cm}
\noindent
Institute of Mathematics \\
Hebrew University, Givat Ram \\
Jerusalem, 91904, Israel \\


\begin{thebibliography}{[COGP]}


\bibitem{BM09} E. Brugall\'e, G. Mikhalkin,
 {\em Floor decompositions of tropical curves: the planar case},
{Proceedings of G\"okova Geometry-Topology Conference (2008), 64-90.}

\bibitem{cho} Cho, C. H., {\em Counting real pseudo-holomorphic discs and spheres in dimension four and six},
{J. Korean Math. Soc. {\bf 45} (2008), no. 5, 1427-1442.}


\bibitem{Frauen} U. Frauenfelder, {\em Gromov convergence of pseudoholomorphic disks},
{J. Fixed Point Theory and Application {\bf 3} (2008), no. 2,  215-271. }

\bibitem{GH}{P. Griffiths, J. Harris,} {\em Principles of algebraic geometry,}
{Pure and Applied Mathematics, Wiley-Interscience, New York, (1978), 813 pp. }

\bibitem{GP} L. G\"ottsche, R. Pandharipande, {\em The quantum cohomology of blowups of $\P^2$ and enumerative geometry},
{J. Diff. Geom. {\bf 48} (1998), no. 1, 61-90.}


\bibitem{IKS2} I. Itenberg, V. Kharlamov, E. Shustin, {\em Welschinger invariant and enumeration of real rational curves},
{Internat. Math. Res. Notices {\bf 49} (2003), 2639-2653.}

\bibitem{IKS3} I. Itenberg, V. Kharlamov, E. Shustin, {\em Logarithmic equivalence of Welschinger and Gromov-Witten invariants}, {Uspekhi Mat. Nauk {\bf 59} (2004), no. 6, 85-110 (Russian); English translation: Russian Math. Surveys {\bf 59} (2004), no. 6, 1093-1116.}

\bibitem{IKS4} I. Itenberg, V. Kharlamov, E. Shustin, {\em Logarithmic asymptotics of the genus zero Gromov-Witten invariants of the blown up plane},
{Geometry and Topology {\bf 9} (2005), paper no. 14, 483-491.}

\bibitem{IKS5} I. Itenberg, V. Kharlamov, E. Shustin, {\em New cases of logarithmic equivalence of Welschinger and Gromov-Witten invariants},
{Proc. Steklov Math. Inst. {\bf 258} (2007), 65-73.}

\bibitem{IKS6} I. Itenberg, V. Kharlamov, E. Shustin, {\em A Caporaso-Harris type formula for Welschinger invariants of real toric Del Pezzo surfaces},
{Comment. Math. Helv. {\bf 84} (2009),  no. 1, 87-126.}

\bibitem{IKS7} I. Itenberg, V. Kharlamov, E. Shustin, {\em Welschinger invariance of small non-toric Del Pezzo surfaces},
\href{http://arxiv.org/abs/1002.1399}{arXiv:1002.1399}, to appear in Journal of EMS, 2012.

\bibitem{IKS8} I. Itenberg, V. Kharlamov, E. Shustin, {\em Welschinger invariants of real Del Pezzo surfaces of degree $\ge 3$},
\href{http://arxiv.org/abs/1108.3369}{arXiv:1108.3369}.

\bibitem{ko}  M. Kontsevich, Yu. Manin, {\em Gromov-Witten classes, quantum cohomology, and enumerative geometry},
{Comm. Math. Phys. {\bf 164} (1994), no. 3, 525-562. }




\bibitem{MSO} D. McDuff, D. Salamon, {\em $J$-holomorphic curves and quantum cohomology,}
{University Lecture Series, 6, American Mathematical Society, Providence, RI, (1994), 207 pp. }

\bibitem{MS} D. McDuff, D. Salamon,  {\em $J$-holomorphic curves and symplectic topology},
{American Mathematical Society Colloquium Publications, 52, American Mathematical Society, Providence, RI, (2004), 669 pp. }

\bibitem{RT} Y. Ruan, G. Tian, {\em A mathematical theory of quantum cohomology,}
{J. Diff. Geom. {\bf 42} (1995), no. 2, 259-367.}

\bibitem{jake1} J. Solomon, {\em Intersection theory on the moduli space of holomorphic curves with Lagrangian boundary conditions}, \href{http://arxiv.org/abs/math.SG/0606429}{arXiv:math.SG/0606429}.

\bibitem{jake2}  J. Solomon, {\em A differential equation for the open Gromov-Witten potential},
preprint.

\bibitem{Welschinger}{J.-Y. Welschinger,}
    {\it Invariants of real symplectic 4-manifolds and lower bounds in real enumerative geometry},
    {Invent. Math. {\bf 162} (2005), 195-234.}

\bibitem{We2}{J.-Y. Welschinger,}
    {\it Spinor states of real rational curves in real algebraic convex 3-manifolds and enumerative invariants,}
        {Duke Math. J.  {\bf 127}  (2005),  no. 1, 89--121.}

\bibitem{Witten} E. Witten, {\em Two-dimensional gravity and intersection theory on moduli space},
{Surveys in Diff. Geom. {\bf 1}, (1991), 243-310.}


\end{thebibliography}
\end{document}